\newtheorem{theorem}{Theorem}[section]
\newtheorem{prop}[theorem]{Proposition}
\newtheorem{corollary}[theorem]{Corollary}
\newtheorem{lemma}[theorem]{Lemma}
\newtheorem{mainthm}{Theorem}
\theoremstyle{definition}
\newtheorem{definition}[theorem]{Definition}
\newtheorem{rem}[theorem]{Remark}
\newtheorem{example}[theorem]{Example}
\def\R{{\mathbb{R}}}
\def\N{{\mathbb{N}}}
\def\C{{\mathbb{C}}}
\def\Q{{\mathbb{Q}}}
\def\Z{{\mathbb{Z}}}
\begin{document}
\title[Rokhlin Dimension and Equivariant Bundles]{Rokhlin Dimension and Equivariant Bundles}
\author[Prahlad Vaidyanathan]{Prahlad Vaidyanathan}
\address{Department of Mathematics\\ Indian Institute of Science Education and Research Bhopal\\ Bhopal ByPass Road, Bhauri, Bhopal 462066\\ Madhya Pradesh. India.}
\email{prahlad@iiserb.ac.in}
\date{}
\begin{abstract}
Given an action of a compact group on a complex vector bundle, there is an induced action of the group on the associated Cuntz-Pimsner algebra. We determine conditions under which this action has finite Rokhlin dimension.
\end{abstract}
\subjclass[2010]{Primary 46L85; Secondary 46L55}
\keywords{C*-algebras, Rokhlin Dimension}
\maketitle
\parindent 0pt

The study of group actions on C*-algebras has long been an important part of the general theory of C*-algebras, acting as an important source of new examples (via the crossed product construction), and as a way to understand C*-algebras intrinsically by analyzing their automorphisms. In this context, one is often interested in a useful noncommutative analogue of a \emph{free} action of a group on a topological space. There are a variety of different possible definitions of `freeness' in the context of C*-algebras (see, for instance, \cite{phillips}). In this paper, we wish to study one such notion, that of finite Rokhlin dimension. \\

Rokhlin dimension was invented by Hirshberg, Winter, and Zacharias \cite{hirshberg}, to provide a higher rank analogue of the Rokhlin property. While the original definition applied to finite groups (and the integers), Gardella \cite{gardella_compact} extended this definition to the case of a compact, second countable group. Finiteness of Rokhlin dimension is an important property for a group action to possess, as it implies (especially in the case of `commuting towers') certain structural properties of the crossed product. In particular, it allows one to show that many important properties pass from the underlying C*-algebra to the crossed product (see \cite{gardella_regularity}, \cite{gardella}). \\

In this paper, we wish to analyze the Rokhlin dimension of actions on Cuntz-Pimsner algebras arising from actions on the underlying C*-correspondence (the question of whether the Cuntz-Pimsner algebra has finite nuclear dimension has been asked, and partially addressed by Brown, Tikuisis and Zelenberg \cite{brown}). While it seems difficult to obtain a broad result covering all Cuntz-Pimsner algebras, we were able to do so when the underlying C*-correspondence arises from a vector bundle over a compact metric space. Our main result is as follows.

\begin{mainthm}\label{mainthm: action_bundle}
Let $G$ be a compact, second countable group, and $(E,p,X)$ be a locally trivial, complex, Hermitian vector bundle over a compact metric space $X$. Given an action of $G$ on $(E,p,X)$, let $\beta$ denote the induced action on the associated Cuntz-Pimsner algebra $\mathcal{O}_E$.
\begin{enumerate}[label=(\roman*)]
\item If the action of $G$ on $X$ is free, then
\[
\dim_{Rok}(\beta) \leq \dim(X/G).
\]
\item If $G$ is finite, the action of $G$ on $X$ is trivial, and the induced representation of $G$ on each fiber of $E$ is faithful, then
\[
\dim_{Rok}(\beta) \leq 2\dim(X)+1.
\]
\end{enumerate}
\end{mainthm}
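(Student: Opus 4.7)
My plan is to exploit the continuous $C(X)$-algebra structure of $\mathcal{O}_E$: since $E$ is locally trivial, $\mathcal{O}_E$ is a $C(X)$-algebra with $C(X)$ sitting centrally, and the induced action $\beta$ extends the $G$-action on $X$ via the equivariant inclusion $C(X) \hookrightarrow \mathcal{O}_E$.

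For part (i), I would push Rokhlin elements up from $C(X)$. Since the $G$-action on $X$ is free and $X$ is a compact metric space, the Rokhlin dimension of $G \curvearrowright C(X)$ is at most $\dim(X/G)$ by Gardella's theorem for free actions of compact second countable groups on finite-dimensional compact metric spaces. The plan is to take equivariant c.p.c.\ order zero maps $\varphi_0,\dots,\varphi_{\dim(X/G)} \colon C(G) \to C(X)_\infty \cap C(X)'$ summing to the unit and compose with the inclusion $C(X) \hookrightarrow \mathcal{O}_E$. Centrality of $C(X)$ in $\mathcal{O}_E$ forces the composites to land in $(\mathcal{O}_E)_\infty \cap \mathcal{O}_E'$, while equivariance, the order zero property, and the unital sum are inherited from the $C(X)$-picture. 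This yields $\dim_{Rok}(\beta) \leq \dim(X/G)$.

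For part (ii), the base action is trivial, so Rokhlin elements must be built fiberwise and then globalized. The local input is Izumi's theorem: for a finite group $G$ with a faithful unitary representation on $\mathbb{C}^n$, the induced quasi-free action on $\mathcal{O}_n$ has the Rokhlin property. Because $G$ is finite and the base action is trivial, the representation type is locally constant and any $G$-equivariant vector bundle admits local equivariant trivializations, giving $\mathcal{O}_E|_U \cong C(U) \otimes \mathcal{O}_n$ equivariantly, so Izumi's theorem supplies local Rokhlin elements. To globalize, I would cover $X$ by such trivializing open sets, refine so that the cover decomposes into $\dim(X)+1$ subfamilies of pairwise disjoint sets (possible because $\dim(X)$ is the covering dimension), patch the local Rokhlin elements with a $G$-invariant partition of unity, and invoke the standard doubling trick needed to preserve approximate orthogonality of $\{f_g\}_{g \in G}$ after gluing. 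This doubling accounts for the factor of $2$: the scheme produces $2(\dim(X)+1)$ towers, giving the bound $\dim_{Rok}(\beta) \leq 2\dim(X)+1$.

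The main obstacle, I expect, is in (ii): locally constructed Rokhlin elements commute exactly with the local presentation $C(U) \otimes \mathcal{O}_n$ but not a priori with generators of $\mathcal{O}_E$ coming from a different trivialization chart. The technical heart of the argument is to arrange uniform norm estimates and a cutoff scheme whose discrepancies vanish when passed to $(\mathcal{O}_E)_\infty \cap \mathcal{O}_E'$, while keeping the partition of unity $G$-invariant and the gluing compatible with the colouring. The faithfulness hypothesis is essential for the Izumi input, and the finiteness of $G$ provides both that input and the $G$-invariant averaging used throughout.
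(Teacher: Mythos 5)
Your outline for part (i) is essentially the paper's argument: Rokhlin towers for the free action on $C(X)$ are pushed into $\mathcal{O}_E$ through the central copy $\widehat{\pi}(C(X))$, and centrality handles commutation with the generators $\widehat{\tau}(\xi)$. One small caveat: Gardella's theorem on free actions is stated for compact \emph{Lie} groups (it uses local cross-sections), so for a general compact second countable group you still need an argument — the paper supplies one by viewing $C(X)$ as a $C(X/G)$-algebra with fibers $C(G\cdot x)\cong C(G)$ and invoking a fiber-to-total-algebra result for Rokhlin dimension of $C(Y)$-algebras. With that substitution, part (i) goes through as you describe.

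Part (ii) has a genuine gap: the local input you call ``Izumi's theorem'' is false. A quasi-free action of a finite group $G$ on $\mathcal{O}_n$ coming from a faithful unitary representation does \emph{not} in general have the Rokhlin property; the Rokhlin property forces $[1_{\mathcal{O}_n}]$ to be divisible by $|G|$ in $K_0(\mathcal{O}_n)\cong \Z/(n-1)$, which already fails for $G=\Z/2$ acting on $\mathcal{O}_3$ via $\mathrm{diag}(1,-1,-1)$. What is true (and what the paper uses) is that such actions are pointwise outer by Enomoto--Takehana--Watatani, and pointwise outer actions of finite groups on Kirchberg algebras have Rokhlin \emph{dimension} at most $1$ (Gardella). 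This also changes the bookkeeping for the factor of $2$: it does not come from a ``doubling trick'' to restore orthogonality after gluing — the paper's patching uses a strongly $n$-decomposable cover, where sets of the same colour have disjoint closures, so the algebra over each colour splits as a direct sum and no doubling is needed. The factor of $2$ is exactly the $(d+1)$ with $d=1$ in the estimate $\dim_{Rok}(\beta)\leq(\dim(X)+1)(d+1)-1$ for a $C(X)$-algebra with fiberwise Rokhlin dimension $d$; with your (incorrect) $d=0$ input your scheme would yield $\dim(X)$, not $2\dim(X)+1$. Your concern about Rokhlin elements from one chart failing to commute with generators from another is legitimate, and is resolved in the paper not by equivariant local trivializations but by working with the quotients $\mathcal{O}_E(\overline{V})$, equivariant c.c.p.\ sections obtained from Choi--Effros plus averaging over $G$, and upper semicontinuity of the fiberwise norms.
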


Note that, for a compact Hausdorff space $Y$, $\dim(Y)$ refers to the Lebesgue covering dimension of $Y$, and for an action $\alpha$, $\dim_{Rok}(\alpha)$ refers to the Rokhlin dimension of $\alpha$. \\

The paper is organized as follows: In \cref{sec: rokhlin}, we review the notion of Rokhlin dimension for compact, second countable groups. We also prove a `local' version of the definition, while filling a small gap in the literature. In \cref{sec: cx_algebras}, we study $C(X)$-algebras, and prove that, under certain conditions, finiteness of Rokhlin dimension passes from the fibers of a $C(X)$-algebra to the ambient algebra (\cref{thm: rokhlin_cx_algebra}). In \cref{sec: correspondences}, we study group actions on C*-correspondences, and give a natural definition under which an action on a C*-correspondence induces an action of finite Rokhlin dimension on the associated Cuntz-Pimsner algebra. Finally, \cref{sec: bundles} is devoted to the proof of \cref{mainthm: action_bundle}.

\section{Rokhlin Dimension}\label{sec: rokhlin}

We begin by reviewing the notion of Rokhlin dimension from \cite{gardella_compact}. In what follows, all groups will be assumed to be compact and second countable, and all C*-algebras will be separable (unless otherwise stated). \\

Let $A$ be a C*-algebra. Let $\ell^{\infty}(\N,A)$ denote the C*-algebra of all bounded sequences in $A$ with the supremum norm, and pointwise operations. Write $c_0(\N,A)$ for those sequences in $\ell^{\infty}(\N,A)$ which converge to zero in the norm. Then, $c_0(\N,A)$ is an ideal of $\ell^{\infty}(\N,A)$. We write
\[
A_{\infty} := \frac{\ell^{\infty}(\N,A)}{c_0(\N,A)}
\]
for the corresponding quotient, and write $\eta_A : \ell^{\infty}(\N,A)\to A_{\infty}$ for the quotient map. Note that $A$ can be identified with the subalgebra of $\ell^{\infty}(\N,A)$ consisting of constant sequences. We identify $A$ with a subset of  $A_{\infty}$ via the composition $A\hookrightarrow \ell^{\infty}(\N,A)\xrightarrow{\eta_A} A_{\infty}$. We write $A_{\infty}\cap A'$ for the relative commutant of $A$ in $A_{\infty}$, and $\text{Ann}(A,A_{\infty})$ for the annihilator of $A$ in $A_{\infty}$. Then, $\text{Ann}(A,A_{\infty})$ is an ideal of $A_{\infty}\cap A'$, so we write
\[
F(A) := \frac{A_{\infty}\cap A'}{\text{Ann}(A,A_{\infty})}
\]
for the corresponding quotient, and $\kappa_A : A_{\infty}\cap A'\to F(A)$ for the natural quotient map. Note that $F(A)$ is unital if $A$ is $\sigma$-unital, where the unit $1_{F(A)}$ is the image of any countable approximate unit of $A$.\\

Let $G$ be a locally compact group, and let $A$ be a C*-algebra. By an action of $G$ on $A$, we mean a continuous group homomorphism $\alpha : G\to \text{Aut}(A)$, where $\text{Aut}(A)$ is equipped with the point norm topology. Given such an action, there are induced actions on $\ell^{\infty}(\N,A)$, on $A_{\infty}\cap A'$ and on $F(A)$, which we denote by $\alpha^{\infty}, \alpha_{\infty},$ and $F(\alpha)$ respectively. Note that these actions need not be continuous, so we write
\[
\ell_{\alpha}^{\infty}(\N,A) := \{a \in \ell^{\infty}(\N,A) : g\mapsto \alpha_g^{\infty}(a) \text{ is continuous}\}.
\]
We set $A_{\infty,\alpha} := \eta_A(\ell_{\alpha}^{\infty}(\N,A))$, then $A_{\infty,\alpha}$ is invariant under $\alpha_{\infty}$, and the restriction of $\alpha_{\infty}$ to this algebra (also denoted by $\alpha_{\infty}$) is continuous. Similarly, we set $F_{\alpha}(A) := \kappa_A(A_{\infty,\alpha}\cap A')$, and write $F(\alpha)$ for the continuous restriction of $F(\alpha)$ to $F_{\alpha}(A)$. \\

Let $A$ be a C*-algebra. We say that $a,b\in A$ are \emph{orthogonal} (in symbols, we write $a\perp b$) if $ab=ba = a^{\ast}b = b^{\ast}a = 0$. Given a contractive, completely positive map  $\varphi : A\to B$ between two C*-algebras, we say that $\varphi$ has \emph{order zero} if, for every $a,b\in A$, $\varphi(a) \perp \varphi(b)$ whenever $a\perp b$. As is customary, we will henceforth use the abbreviation \emph{c.c.p.} for `contractive and completely positive'. \\

Given a group $G$, we write $\sigma : G\to \text{Aut}(C(G))$ for the left action of $G$ on $C(G)$, given by $\sigma_s(f)(t) := f(s^{-1}t)$. The next definition is due to Gardella \cite{gardella_compact}, inspired by the case of a finite group, due to Hirshberg, Winter, and Zacharias \cite{hirshberg}.

\begin{definition}\cite[Definition 3.2]{gardella_compact}\label{defn: rokhlin_dim}
Let $G$ be a compact, second countable group, and $A$ be a separable C*-algebra. For an action $\alpha : G\to \text{Aut}(A)$, we say that $\alpha$ has Rokhlin dimension $d$ if $d$ is the least integer such that there exist equivariant, contractive, completely positive, order zero maps
\[
\varphi_0, \varphi_1, \ldots, \varphi_d : (C(G), \sigma) \to (F_{\alpha}(A), F(\alpha))
\]
such that $\sum_{j=0}^d \varphi_j(1) = 1$. We denote the Rokhlin dimension of $\alpha$ by $\dim_{Rok}(\alpha)$. If no such integer $d$ exists, we say that $\alpha$ has infinite Rokhlin dimension, and we write $\dim_{Rok}(\alpha) = +\infty$.
\end{definition}

\begin{rem}\label{rem: commuting_ranges}
In the above definition, if we require the maps $\{\varphi_j : 0\leq j\leq d\}$ to have commuting ranges, then one arrives at the definition of \emph{Rokhlin dimension with commuting towers}, and the corresponding integer is denoted by $\dim_{Rok}^c(\alpha)$. Clearly, the two notions coincide if the algebra $A$ is commutative, but they are, in general, different. \\

Some of our results (notably the first part of \cref{mainthm: action_bundle} - see \cref{thm: rokhlin_free}) holds true if we replace $\dim_{Rok}$ with the stronger notion of $\dim_{Rok}^c$. However, proving this does not involve significantly different ideas, so we have avoided this in favour of cleaner exposition.
\end{rem}

Our goal in this section is to give a `local' version of this definition. The theorem given below (\cref{lem: rokhlin_local}) is originally due to Gardella \cite[Lemma 3.7]{gardella_compact}. However, it seems to us that the original proof has a small gap. Specifically, in \cite[Lemma 3.7]{gardella_compact}, the author states that a c.c.p. map that preserves orthogonality on a dense subalgebra must have order zero. This assumes that the dense subalgebra has sufficiently many orthogonal elements, which may not necessarily be true. We now address this issue by constructing such a subalgebra in the situation which is of interest to us. \\

Given a compact metric space $(Y,d)$, we wish to construct a specific countable subalgebra $\mathcal{A}\subset C(Y)$ which is not only dense in $C(Y)$, but is also closed under multiplication by certain `bump' functions. To that end, we define
\[
\mathcal{F} := \{(K,U) : K\subset U\subset Y, K \text{ compact, and } U \text{ open}\}.
\]
For two elements $(K_1,U_2), (K_2,U_2) \in \mathcal{F}$, we define $(K_1,U_1)\leq (K_2,U_2)$ if $K_1\subset K_2$ and $U_2\subset U_1$. Note that this defines a partial order on $\mathcal{F}$. Now, fix a countable dense set $D := \{x_n\} \subset Y$. For each $n\in \N$ and $q\in \Q$ with $q>0$, define
\[
U_{n,q} := \{y\in Y: d(y,x_n) < q\} \text{ and } K_{n,q} := \{y\in Y : d(y,x_n) \leq q\}.
\]
For an integer $k\geq 1$, and tuples $\overline{n} = (n_1,n_2,\ldots, n_k) \in \N^k,$ and $\overline{q} = (q_1,q_2,\ldots, q_k) \in \Q^k$, we define
\[
U_{\overline{n},\overline{q}} := \bigcup_{j=1}^k U_{n_j,q_j} \text{ and } K_{\overline{n},\overline{q}} := \bigcup_{j=1}^k K_{n_j,q_j}.
\]
We set
\[
\mathcal{F}_0 := \bigcup_{k=1}^{\infty} \{(K_{\overline{n},\overline{r}}, U_{\overline{n},\overline{q}}) : \overline{n} \in \N^k, \overline{r}, \overline{q} \in \Q^k, r_i < q_i \text{ for all } 1\leq i\leq k\}.
\]
Note that $\mathcal{F}_0$ is a subset of $\mathcal{F}$ and is countable, because the set of all finite subsets of $\N\times \Q\times \Q$ is countable.

\begin{lemma}\label{lem: order_dominate}
Let $(Y,d)$ be a compact metric space and $\mathcal{F}$ and $\mathcal{F}_0$ be as above. Then, for any $(K,U) \in \mathcal{F}$, there exists $(K',U') \in \mathcal{F}_0$ such that $(K,U)\leq (K',U')$.
\end{lemma}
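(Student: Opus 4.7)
The plan is to use compactness of $K$ together with density of $D$ and $\Q$ to build $K'$ and $U'$ from finitely many of the allowed balls. The crucial positive quantity will be
\[
\eta := d(K, Y\setminus U),
\]
which is strictly positive because $K$ is compact, $Y\setminus U$ is closed, and the two sets are disjoint.

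First, for each point $k\in K$, I would use the density of $D$ in $Y$ and of $\Q$ in $\R$ to choose an index $n_k\in \N$ and rationals $0 < r_k < q_k$ such that
\[
d(k, x_{n_k}) < r_k < q_k < \tfrac{\eta}{2}.
\]
For example, one can first pick $x_{n_k}\in D$ with $d(k,x_{n_k})<\eta/8$, then select rationals $r_k\in(d(k,x_{n_k}),\eta/4)$ and $q_k\in(r_k,\eta/2)$. This choice has two key consequences: (a) $k$ lies in the \emph{open} ball $U_{n_k,r_k}$ since $d(k,x_{n_k})<r_k$; and (b) $U_{n_k,q_k}\subset U$, because if $y\in U_{n_k,q_k}$ then
\[
d(y,k) \le d(y,x_{n_k}) + d(x_{n_k},k) < \tfrac{\eta}{2} + \tfrac{\eta}{8} < \eta,
\]
forcing $y\in U$ (otherwise $y\in Y\setminus U$ would give $d(y,k)\ge \eta$).

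Next, I would observe that the collection $\{U_{n_k,r_k}\}_{k\in K}$ is an open cover of $K$. By compactness of $K$, extract a finite subcover indexed by $k_1,\ldots,k_m\in K$. Setting
\[
\overline{n} = (n_{k_1},\ldots,n_{k_m}),\qquad \overline{r} = (r_{k_1},\ldots,r_{k_m}),\qquad \overline{q} = (q_{k_1},\ldots,q_{k_m}),
\]
I would let $K' := K_{\overline{n},\overline{r}}$ and $U' := U_{\overline{n},\overline{q}}$. Then $K\subset \bigcup_j U_{n_{k_j},r_{k_j}} \subset \bigcup_j K_{n_{k_j},r_{k_j}} = K'$, while $U' = \bigcup_j U_{n_{k_j},q_{k_j}} \subset U$ by (b). Since $r_j<q_j$ for each $j$, this shows $(K',U')\in \mathcal{F}_0$ and $(K,U)\le (K',U')$.

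The argument is essentially a quantitative compactness exercise; the only subtlety is to remember that although the sets $K_{n,r}$ in $\mathcal{F}_0$ are closed balls, extracting a finite subcover of $K$ requires working with the open balls $U_{n_k,r_k}$, and then passing to the enclosing closed balls $K_{n_k,r_k}$ afterwards. The inequality $r_k<q_k$ is forced by the definition of $\mathcal{F}_0$ and costs nothing since we had slack when choosing the rationals.
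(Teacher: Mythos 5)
Your proof is correct, and it takes a somewhat different route from the paper's. The paper first writes $U$ as a union of basic balls $U_{n,q}$, extracts a finite subcover $U' = \bigcup_{j=1}^k U_{n_j,q_j}$ of $K$ from that representation (so $U'\subset U$ is automatic), and then produces $K'$ by an inductive shrinking argument: for each $j$ it uses the maximum of $y\mapsto d(y,x_{n_j})$ on a compact set to find a rational $r_j<q_j$ with $K\cap K_{n_j,q_j}\subset K_{n_j,r_j}$. You instead work ``bottom-up'': the uniform gap $\eta = d(K, Y\setminus U)>0$ lets you choose, around each point of $K$, a \emph{pair} of concentric rational-radius balls, both small relative to $\eta$, with the inner open ball containing the point and the outer open ball contained in $U$; a single application of compactness then yields both $K'$ and $U'$ simultaneously. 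Your version avoids the induction entirely and is arguably cleaner, at the cost of leaning on the metric through $\eta$ (the paper's argument only uses the metric to get a countable basis and the attained maximum of the distance function). The one point worth a clause in a write-up is the degenerate case $U=Y$, where $Y\setminus U=\emptyset$ and $\eta$ is not literally defined as a distance between two nonempty sets; either adopt the convention $d(K,\emptyset)=+\infty$ or replace $\eta$ by $\min\{d(K,Y\setminus U),1\}$, and the argument goes through unchanged. This is a triviality, not a gap.
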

\begin{proof}
Fix $(K,U) \in \mathcal{F}$. We know that the collection $\{U_{n,q} : n \in \N, q\in \Q\}$ forms a basis for the metric topology on $Y$. Hence, we may write
\[
U = \bigcup_{(n,q) \in S} U_{n,q}
\]
for some subset $S\subset \N\times \Q$. Since $K$ is compact, there exist finitely many pairs $(n_1,q_1)$, $(n_2,q_2)$,$\ldots$, $(n_k,q_k)\in S$ such that
\[
K \subset \bigcup_{j=1}^k U_{n_j,q_j} =: U'.
\]
It follows that $U'\subset U$. We now construct $K'$ by induction on $k$. \\

If $k=1$: Then $K\subset U_{n_1,q_1}$. Consider the function $f:K\to \R$ given by $x\mapsto d(x,x_{n_1})$. This function satisfies $f(x) < q_1$ for all $x\in K$, and must attain a maximum on $K$. Hence, there exists $M \in \R$ such that $M < q_1$ and $f(x)\leq M$ for all $x\in K$. Choose a rational number $r_1\in \Q$ such that $M\leq r_1 < q_1$, and set $K' := K_{n_1,r_1}$. It follows that $(K',U') \in \mathcal{F}_0$ and $(K,U)\leq (K',U')$. \\

If $k\geq 2$, then assume, by induction, that we have chosen $r_i < q_i$ for $i=1,2,\ldots, k-1$ such that
\[
K \subset K_{n_1,r_1}\cup K_{n_2,r_2}\cup \ldots \cup K_{n_{k-1},r_{k-1}}\cup U_{n_k,q_k}
\]
and, for all $1\leq i\leq k-1$, $K\cap K_{n_i,q_i} \subset K_{n_i,r_i}$. Now consider $h:K\cap K_{n_k,q_k} \to \R$ given by $y\mapsto d(y,x_{n_k})$. Once again, this function must attain a maximum $M' < q_k$ on $K$. Choose a rational number $r_k \in \Q$ such that $M'\leq r_k < q_k$, so that $K\cap K_{n_k,q_k} \subset K_{n_k,r_k}$. Then,
\[
K\subset \bigcup_{j=1}^k K_{n_j,r_j} =: K'.
\]
So that $(K',U') \in \mathcal{F}_0$ and $(K,U)\leq (K',U')$.
\end{proof}

We now construct a countable subset $\mathcal{A} \subset C(Y)$ as follows: For each $(K,U) \in \mathcal{F}_0$, choose a function $\rho = \rho_{(K,U)} \in C(Y)$ such that $0\leq \rho\leq 1$ and
\[
\rho\lvert_{K} = 1 \text{ and } \rho\lvert_{Y\setminus U} = 0.
\]
For each pair of finite sets $F_1\subset \mathcal{F}_0$ and $F_2\subset D$, define a function $f_{F_1,F_2} \in C(Y)$ by
\[
f_{F_1,F_2}(x) = \left(\prod_{(K,U) \in F_1} \rho_{(K,U)}(x)\right)\left( \prod_{y\in F_2}d(x,y)\right).
\]
We allow the possibility that $F_1 = \emptyset$ or $F_2 = \emptyset$, in which case the corresponding product is simply the constant function $1$. Note that, for any other pair of finite sets $F_1' \subset \mathcal{F}_0$ and $F_2'\subset D$, we have $f_{F_1,F_2}f_{F_1',F_2'} = f_{F_1\cup F_1', F_2\cup F_2'}$. We define $\mathcal{A}$ to be the linear span of the set
\[
\{f_{F_1,F_2} : F_1\subset \mathcal{F}_0, F_2\subset D \text{ finite}\}
\]
where the coefficients come from the field $\Q[i]$. Then, it follows that $\mathcal{A}$ is an algebra, it is countable, it contains the constant function $1$, and it is closed under complex conjugation. Furthermore, $\mathcal{A}$ separates points of $Y$ since it contains the functions $\{ x\mapsto d(x,y) : y\in D\}$. Thus, $\mathcal{A}$ is dense in $C(Y)$ by the Stone-Weierstrass theorem. \\

Finally, if $f \in \mathcal{A}$ and $\rho = \rho_{(K',U')}$ for some $(K',U') \in \mathcal{F}_0$, it follows that $\rho f \in \mathcal{A}$. It is this property that is relevant to us in what follows. Note that, for a function $g\in C(Y)$, we write $\text{supp}(g)$ for the set $\{y\in Y : g(y) \neq 0\}$.

\begin{lemma}\label{lem: approximation_support}
Let $(Y,d)$ be a metric space and $\mathcal{A} \subset C(Y)$ be the subalgebra defined above. Let $f\in C(Y)$ be a non-negative function, and let $\delta > 0$. Then, there exists $f_0 \in \mathcal{A}$ such that $\|f-f_0\| < \delta$ and $\text{supp}(f_0) \subset \text{supp}(f)$.
\end{lemma}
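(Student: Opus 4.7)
The plan is to truncate a dense approximation of $f$ by one of the distinguished bump functions $\rho_{(K',U')} \in \mathcal{A}$, chosen so that it equals $1$ wherever $f$ is non-negligible and vanishes outside $\text{supp}(f)$.

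Write $U := \text{supp}(f)$, and set $K := \{y \in Y : f(y) \geq \delta/3\}$. As $Y$ is (implicitly) compact and $f$ is continuous, $K$ is compact; since $f \geq 0$ with $f(y) \geq \delta/3 > 0$ on $K$, we have $K \subset U$. Hence $(K,U) \in \mathcal{F}$, and \cref{lem: order_dominate} furnishes $(K',U') \in \mathcal{F}_0$ with $K \subset K'$ and $U' \subset U$. Put $\rho := \rho_{(K',U')}$; by definition $\rho \in \mathcal{A}$, $0 \leq \rho \leq 1$, $\rho|_{K'} = 1$, and $\{y : \rho(y) \neq 0\} \subset U'$.

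By density of $\mathcal{A}$ in $C(Y)$, choose $g \in \mathcal{A}$ with $\|f - g\| < \delta/3$, and set $f_0 := \rho g$. The closure property recorded just before the lemma statement gives $f_0 \in \mathcal{A}$, and
\[
\text{supp}(f_0) \subset \{y : \rho(y) \neq 0\} \subset U' \subset U = \text{supp}(f).
\]
To bound $\|f - f_0\|$, split pointwise: on $K$ we have $\rho = 1$, so $(f - \rho f)(y) = 0$, while off $K$ we have $0 \leq f(y) < \delta/3$ and $0 \leq 1 - \rho(y) \leq 1$, so $|(f - \rho f)(y)| \leq \delta/3$. Consequently
\[
\|f - f_0\| \leq \|f - \rho f\| + \|\rho\|\,\|f - g\| < \tfrac{\delta}{3} + \tfrac{\delta}{3} < \delta.
\]

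The main point is that the naive candidate $f_0 = \rho f$ has the right support and error but is generally not in $\mathcal{A}$, since $f$ itself need not lie in $\mathcal{A}$; conversely, a generic dense approximation from $\mathcal{A}$ need not have the correct support. The fix is to combine the two by multiplying a dense approximation $g$ by a bump $\rho$ that equals $1$ on the compact set where $f$ is at least $\delta/3$ and vanishes outside $\text{supp}(f)$, and which crucially lies in $\mathcal{A}$. The entire purpose of the preceding construction of $\mathcal{F}_0$ and \cref{lem: order_dominate} is to guarantee that such bumps are available inside $\mathcal{A}$, after which the lemma reduces to the triangle-inequality estimate above.
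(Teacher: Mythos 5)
Your proof is correct and follows essentially the same route as the paper's: approximate $f$ by some $g \in \mathcal{A}$, then multiply by a bump $\rho_{(K',U')}$ where $(K',U')$ is chosen via \cref{lem: order_dominate} to dominate $(\{f \geq c\}, \operatorname{supp}(f))$, so that the product stays in $\mathcal{A}$, has the right support, and loses at most an extra $c$ in norm. The only differences are the choice of constants and your (correct) explicit remark that $Y$ must be compact for $K$ to be compact.
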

\begin{proof}
Note that the algebra $\mathcal{A}$ is dense in $C(Y)$, so we may choose $f_1 \in \mathcal{A}$ such that $\|f-f_1\| < \delta/2$. Now set
\[
K := \{y\in Y : f(y) \geq \frac{\delta}{4}\} \text{ and } U := \text{supp}(f).
\]
Then $(K,U) \in \mathcal{F}$. By \cref{lem: order_dominate}, we may choose $(K',U') \in \mathcal{F}_0$ such that $(K,U) \leq (K',U')$. Let $\rho = \rho_{(K',U')}$ as above, and define
\[
f_0 := \rho f_1 \in \mathcal{A}.
\]
By construction, we have $\|\rho f - f\| \leq \frac{\delta}{2}$. Hence, $\|f_0 - f\| < \delta$. Finally, observe that $\text{supp}(f_0) \subset \text{supp}(\rho) \subset U' \subset U = \text{supp}(f)$.
\end{proof}

Finally, we arrive at the local characterization of finite Rokhlin dimension.

\begin{lemma}\label{lem: rokhlin_local}
Let $G$ be a compact, second countable group, and $A$ be a separable C*-algebra generated by a countable set $S = S^{\ast}$. For any action $\alpha : G\to \text{Aut}(A)$, one has $\dim_{Rok}(\alpha) \leq d$ if and only if, for each pair of finite sets $F\subset S, K\subset C(G),$ and each $\epsilon > 0$, there exists a  family of $(d+1)$ c.c.p. maps
\[
\psi_0, \psi_1,\ldots, \psi_d : C(G)\to A
\]
satisfying the following conditions:
\begin{enumerate}[label=(\roman*)]
\item For $f_1, f_2 \in K$ such that $f_1\perp f_2$, $\|\psi_j(f_1)\psi_j(f_2)\| < \epsilon$ for all $0\leq j\leq d$.
\item For any $a\in F$ and $f\in K$, we have $\|[\psi_j(f),a]\| < \epsilon$ for all $0\leq j\leq d$.
\item For any $f\in K$ and $s\in G$, we have $\|\alpha_s(\psi_j(f)) - \psi_j(\sigma_s(f))\| < \epsilon$.
\item For any $a\in F, \|\sum_{j=0}^d \psi_j(1_{C(G)})a - a\| < \epsilon$.
\end{enumerate}
\end{lemma}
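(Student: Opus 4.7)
The plan is to split into the two implications. The forward implication is a standard lifting argument; the reverse is where the real work lies and where the gap mentioned before \cref{lem: order_dominate} gets addressed.

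For the $(\Rightarrow)$ direction, I would start with equivariant c.c.p. order-zero maps $\varphi_0,\dots,\varphi_d : (C(G),\sigma) \to (F_\alpha(A),F(\alpha))$ whose images of $1$ sum to $1$. Nuclearity of $C(G)$ together with the Choi--Effros lifting theorem allows each $\varphi_j$ to be lifted to a c.c.p. map $\widetilde\varphi_j : C(G) \to A_{\infty,\alpha} \cap A'$, and then further to a sequence of c.c.p. maps $\psi_j^{(n)} : C(G) \to A$ representing it in $\ell^{\infty}(\N,A)$. The order-zero, commutator, equivariance, and partition-of-unity properties of the $\varphi_j$ then hold modulo $\mathrm{Ann}(A,A_\infty)$ for the sequence representatives, which is precisely the content of asymptotic versions of (i)--(iv). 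For any prescribed finite $F \subset S$, $K \subset C(G)$, and $\epsilon > 0$, selecting $n$ large enough extracts the required $\psi_j$.

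For the $(\Leftarrow)$ direction, I would fix the countable $\ast$-subalgebra $\mathcal{A} \subset C(G)$ constructed before \cref{lem: approximation_support} (applied with $Y = G$ and a compatible metric), and take increasing finite exhaustions $F_n \subset S$ and $K_n \subset \mathcal{A}$ with $\bigcup_n F_n = S$ and $\bigcup_n K_n = \mathcal{A}$. For each $n$, the hypothesis produces c.c.p. maps $\psi_0^{(n)},\dots,\psi_d^{(n)} : C(G) \to A$ satisfying (i)--(iv) with tolerance $1/n$ on $(F_n, K_n)$. Define $\Psi_j : C(G) \to A_\infty$ by $\Psi_j(f) := \eta_A\bigl((\psi_j^{(n)}(f))_n\bigr)$; each $\Psi_j$ is c.c.p. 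Condition (ii), combined with density of $\mathcal{A}$ in $C(G)$ and the fact that $S$ generates $A$, places $\Psi_j(f)$ in $A_\infty \cap A'$ for every $f$. Condition (iii) yields $\alpha_s^\infty(\Psi_j(f)) = \Psi_j(\sigma_s(f))$ in $A_\infty$ for all $s \in G$ and all $f \in \mathcal{A}$, and continuity of $\Psi_j$ and $\sigma$ extends this to all of $C(G)$; in particular $\Psi_j$ takes values in $A_{\infty,\alpha}$. Setting $\varphi_j := \kappa_A \circ \Psi_j$ produces equivariant c.c.p. maps into $F_\alpha(A)$, and (iv) guarantees $\sum_j \varphi_j(1) = 1$.

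The principal obstacle is showing that each $\varphi_j$ is order zero on all of $C(G)$, not just on $\mathcal{A}$. Condition (i) only yields $\Psi_j(g_1)\Psi_j(g_2) = 0$ in $A_\infty$ for orthogonal pairs $g_1,g_2 \in \mathcal{A}$, and a generic dense subalgebra need not carry enough orthogonal pairs to generate arbitrary orthogonal pairs of $C(G)$ by limits --- this is exactly where the original argument in \cite{gardella_compact} falls short. My fix goes as follows: given $f_1, f_2 \in C(G)$ with $f_1 \perp f_2$, decompose each $f_i$ into its real/imaginary and positive/negative parts, all of which still have supports inside $\mathrm{supp}(f_i)$, so by bilinearity it suffices to handle the case $f_1, f_2 \geq 0$ with $\mathrm{supp}(f_1) \cap \mathrm{supp}(f_2) = \emptyset$. \cref{lem: approximation_support} then provides approximations $f_i^{(m)} \in \mathcal{A}$ with $\mathrm{supp}(f_i^{(m)}) \subset \mathrm{supp}(f_i)$, so that $f_1^{(m)} \perp f_2^{(m)}$ in $\mathcal{A}$. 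Applying the already-established orthogonality for pairs in $\mathcal{A}$ and letting $m \to \infty$ via continuity of $\Psi_j$ gives $\Psi_j(f_1)\Psi_j(f_2) = 0$; the remaining three relations in the definition of order zero follow from commutativity of $C(G)$ and $\ast$-preservation of $\Psi_j$. Hence $\varphi_j$ is order zero, completing the argument.
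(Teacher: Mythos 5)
Your proposal is correct and follows essentially the same route as the paper: exhaust $S$ and the countable subalgebra $\mathcal{A}\subset C(G)$ by finite sets, assemble the approximate Rokhlin systems into c.c.p.\ maps into $A_{\infty,\alpha}\cap A'$, and repair the order-zero gap exactly as the paper does, via \cref{lem: approximation_support} to produce orthogonal approximants in $\mathcal{A}$ with nested supports. The only cosmetic difference is that you justify the reduction to non-negative orthogonal functions by an explicit real/imaginary, positive/negative decomposition, where the paper cites \cite[Remark 2.4]{winter_zacharias}.
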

A family of $(d+1)$ c.c.p. maps satisfying these conditions will henceforth be referred to as a \emph{$(d,F,K,\epsilon)$-Rokhlin system}.
\begin{proof}
We follow the proof of \cite[Lemma 3.7]{gardella_compact}. The `only if' direction of the proof is identical to that result, so we prove the `if' direction. Write
\[
S = \bigcup_{n=1}^{\infty} F_n
\]
where $\{F_n\}$ is an increasing collection of finite sets. Taking $Y = G$ in the above argument, define $\mathcal{A}$ as above. Choose an increasing sequence of finite sets $\{K_n\}$ such that
\[
\mathcal{A} = \bigcup_{n=1}^{\infty} K_n.
\]
For each $n\in \N$, let
\[
\psi_0^{(n)}, \psi_1^{(n)}, \ldots, \psi_d^{(n)} : C(G)\to A
\]
be a $(d,F_n,K_n,\frac{1}{n})$-Rokhlin system. Define $\phi_j : C(G)\to A_{\infty}$ by
\[
\phi_j(f) := \eta_A((\psi_j^{(n)}(f))).
\]
We claim that $\phi_j(f) \in A_{\infty,\alpha}$ for all $f\in C(G)$. Clearly, it suffices to show that $(\psi_j^{(n)}(f))_{n=1}^{\infty} \in \ell^{\infty}_{\alpha}(\N,A)$ for all $f\in \mathcal{A}$. So fix $f\in \mathcal{A}$ and $\epsilon > 0$, and choose $N \in \N$ such that $f\in K_N$ and $\frac{1}{N} < \frac{\epsilon}{3}$. For a fixed $s\in G$, there is an open set $U$ containing $s$ such that $\|\sigma_s(f) - \sigma_t(f)\| < \frac{\epsilon}{3}$ for all $t\in U$. By property (iii) of the hypothesis, and the fact that the $\psi_j^{(n)}$ are contractive, it follows that
\[
\|\alpha_s(\psi_j^{(n)}(f)) - \alpha_t(\psi_j^{(n)}(f))\| < \epsilon
\]
for all $n\geq N$ and all $t\in U$. Thus, $(\psi_j^{(n)}(f))_{n=1}^{\infty} \in \ell^{\infty}_{\alpha}(\N,A)$ as required. Now, we claim that $\phi_j(f) \in A'$ for all $f\in C(G)$. If $f\in \mathcal{A}$ and $a\in S$, then clearly $[\phi_j(f),a] = 0$. Since $S = S^{\ast}$, if $w$ is a word of elements from $S\cup S^{\ast}$, then $[\phi_j(f),w] = 0$. The same is true for linear combinations of such words, and by continuity, we conclude that $[\phi_j(f),a] = 0$ for all $f\in \mathcal{A}$ and all $a\in A$. By approximating in $C(G)$, we conclude that $\phi_j(f) \in A'$ for all $f\in C(G)$. Hence, we get a well-defined map
\[
\phi_j : C(G) \to A_{\infty,\alpha}\cap A'.
\]
Now, we wish to prove that the maps $\{\varphi_j := \kappa_A\circ \phi_j\}_{j=0}^d$ satisfy the requirements of \cref{defn: rokhlin_dim}.
\begin{enumerate}[label=(\roman*)]
\item Clearly, $\phi_j$ is c.c.p. since each $\psi_j^{(n)}$ is.
\item We prove that each $\phi_j$ has order zero. This is where, we believe, the proof of \cite[Lemma 3.7]{gardella_compact} has a small gap. Note that, to prove $\phi_j$ has order zero, it suffices to consider two non-negative functions $f,g\in C(Y)$ such that $f\perp g$, and then prove that $\phi_j(f)\perp \phi_j(g)$ (see \cite[Remark 2.4]{winter_zacharias}). \\

So fix non-negative functions $f,g\in C(G)$ such that $f\perp g$ and $\epsilon > 0$, and choose $\delta > 0$ such that for any $f_0, g_0 \in C(Y)$, if $\|\phi_j(g) - \phi_j(g_0)\| < \delta$, and $\|\phi_j(f) - \phi_j(f_0)\| < \delta$, then
\[
\|\phi_j(f)\phi_j(g) - \phi_j(f_0)\phi_j(g_0)\| < \epsilon
\]
(such a $\delta$ exists by the continuity of multiplication in $A_{\infty}$). Now, we apply \cref{lem: approximation_support}, to find $f_0, g_0 \in \mathcal{A}$ such that $\|f - f_0\| < \delta, \|g-g_0\| < \delta$, and $\text{supp}(f_0) \subset \text{supp}(f)$ and $\text{supp}(g_0) \subset \text{supp}(g)$. Hence, we have $f_0\perp g_0$. Now choose $N \in \N$ so that $f_0, g_0 \in K_N$, and $\frac{1}{N} < \epsilon$. Then, for any $n\geq N$, we have
\[
\|\psi_j^{(n)}(f_0)\psi_j^{(n)}(g_0)\| < \frac{1}{n} \leq \frac{1}{N} < \epsilon
\]
so that $\|\phi_j(f_0)\phi_j(g_0)\| < \epsilon$. Since $\phi_j$ are contractive, we have
\[
\|\phi_j(f) - \phi_j(f_0)\| < \delta \text{ and } \|\phi_j(g) - \phi_j(g_0)\| < \delta
\]
so that $\|\phi_j(f)\phi_j(g)\| < 2\epsilon$. This is true for any $\epsilon > 0$, so $\phi_j(f)\phi_j(g) = 0$. Since c.c.p. maps preserve adjoints, $\phi_j(f)$ and $\phi_j(g)$ are self-adjoint, so $\phi_j(g)\phi_j(f) = 0$ as well. Hence, $\phi_j(f) \perp \phi_j(g)$ as required.
\item Let $x := \sum_{j=0}^d \phi_j(1_{C(G)})$. Then, for any $a\in S$, we have $xa=a$. Since $S = S^{\ast}$, if $w$ is a word of elements in $S\cup S^{\ast}$, we conclude that $xw = w$. The same is true for any linear combination of such words. By continuity of left-multiplication by $x$, we conclude that $xa = a$ for all $a\in A$, so that $\kappa_A(x) = 1_{F_{\alpha}(A)}$.
\item Let $f\in C(G)$ and $h\in G$. We wish to verify that $(\alpha_{\infty})_h(\phi_j(f)) = \phi_j(\sigma_h(f))$. For a fixed $h\in G$, consider the map $C(G)\to A_{\infty}$ given by
\[
f \mapsto (\alpha_{\infty})_h(\phi_j(f)) - \phi_j(\sigma_h(f)).
\]
This map is continuous, so it suffices to verify the equality above for $f\in \mathcal{A}$. So fix $\epsilon > 0$ and choose $N \in \N$ such that $f\in K_N$ and $\frac{1}{N} < \epsilon$. Then, for all $n\geq N$, we have
\[
\|(\alpha_h(\psi_j^{(n)}(f)) - \psi_j^{(n)}(\sigma_h(f))\| < \frac{1}{n} \leq \frac{1}{N} < \epsilon.
\]
Hence, $\|(\alpha_{\infty})_h(\phi_j(f)) - \phi_j(\sigma_h(f))\| < \epsilon$. This is true for any $\epsilon > 0$, so we obtain the desired equality.
\end{enumerate}
Hence, we conclude that the maps $\varphi_j := \kappa_A\circ \phi_j, 0\leq j\leq d$ satisfy the requirements of \cref{defn: rokhlin_dim}.
\end{proof}

\section{$C(X)$-algebras}\label{sec: cx_algebras}

By a theorem of Vasselli (see \cref{thm: vasselli} below), the Cuntz-Pimsner algebra associated to a vector bundle over a compact Hausdorff space $X$ is isomorphic to a $C(X)$-algebra. In order to use this profitably, we show that, under certain conditions, finiteness of Rokhlin dimension passes from the fibers of a $C(X)$-algebra to the ambient algebra. \\

We begin with some definitions: Let $X$ be a compact, Hausdorff space. A $C(X)$-algebra is a C*-algebra $A$ together with a unital $\ast$-homomorphism $\Theta:C(X)\to \mathcal{Z}(M(A))$, where $\mathcal{Z}(M(A))$ denotes the center of the multiplier algebra of $A$. To avoid notational complexity, for a function $f\in C(X)$ and $a\in A$, we simply write $fa := \Theta(f)(a)$. Let $Y\subset X$ be a closed set, and let $C(X,Y)$ denote the ideal of functions in $C(X)$ that vanish on $Y$. Then, $C(X,Y)A$ is a closed ideal of $A$ (by the Cohen factorization theorem \cite[Theorem 4.6.4]{brown_ozawa}), so we write $A(Y) := A/C(X,Y)A$ for the corresponding quotient, and $\pi_Y : A\to A(Y)$ for the quotient map. Furthermore, if $Y = \{x\}$ is a singleton set, then $A(x) := A(\{x\})$ is called the fiber of $A$ at $x$, and we write $\pi_x : A\to A(x)$ for the corresponding quotient map. For $a\in A$, we simply write $a(x)$ for $\pi_x(a) \in A(x)$. For each $a\in A$, we have a map $\Gamma_a : X\to \R$ given by $x\mapsto \|a(x)\|$. An important fact, and one that we will use repeatedly below, is that this function is upper semi-continuous (see \cite[Proposition 1.2]{rieffel}). We say that $A$ is a \emph{continuous} $C(X)$-algebra if each $\Gamma_a$ is continuous. \\

Given a $C(X)$-algebra $A$, an automorphism $\beta \in \text{Aut}(A)$ is said to be $C(X)$-linear if $\beta(fa) = f\beta(a)$ for all $a\in A$ and $f\in C(X)$. We write $\text{Aut}_X(A)$ for the collection of $C(X)$-linear automorphisms of $A$. Note that $\text{Aut}_X(A)$ is a subgroup of $\text{Aut}(A)$. Given an automorphism $\beta \in \text{Aut}_X(A)$, and a closed set $Y\subset X$, there is a natural automorphism $\beta_Y \in \text{Aut}(A(Y))$ such that $\beta_Y\circ \pi_Y = \pi_Y\circ \beta$. \\

In particular, if $\alpha : G\to \text{Aut}_X(A)$ is an action of a group $G$ on $A$ by $C(X)$-linear automorphisms, then, for each closed set $Y\subset X$, there is an action $\alpha_Y : G\to \text{Aut}(A(Y))$ such that the quotient map $\pi_Y:A\to A(Y)$ is $G$-equivariant. As before, if $Y = \{x\}$ is a singleton set, we write $\alpha_x$ for $\alpha_{\{x\}}$. The next lemma is due to Gardella and Lupini \cite[Proposition 4.26]{gardella_lupini}, and is stated below in the form that we need. 

\begin{lemma}\label{lem: equivariant_section}
Let $A$ be a separable, nuclear $C(X)$-algebra and $\alpha : G\to \text{Aut}_X(A)$ be an action of a compact, second countable group $G$ on $A$ by $C(X)$-linear automorphisms. Let $Y\subset X$ be a closed set, and let $\pi_Y : A \to A(Y)$ be the quotient map. Then there is a $G$-equivariant c.c.p. section $s : A(Y)\to A$ such that $\pi_Y\circ s = \text{id}_{A(Y)}$.
\end{lemma}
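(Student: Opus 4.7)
The plan is to construct the section in two stages: first produce an arbitrary c.c.p.\ section, then average it against the Haar measure on $G$ to upgrade it to an equivariant one.

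\textbf{Step 1: Producing an initial c.c.p.\ lift.} Since $A$ is nuclear, the quotient $A(Y) = A/C(X,Y)A$ is also nuclear, and it is separable because $A$ is. By the Choi--Effros lifting theorem, the identity map $\mathrm{id}_{A(Y)} : A(Y) \to A(Y)$ admits a c.c.p.\ lift $s_0 : A(Y) \to A$ with $\pi_Y \circ s_0 = \mathrm{id}_{A(Y)}$. At this stage $s_0$ carries no compatibility with the $G$-action.

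\textbf{Step 2: Averaging against Haar measure.} Let $\mu$ denote the normalized Haar measure on the compact, second countable group $G$, and define
\[
s(a) := \int_G \alpha_{g^{-1}}\bigl(s_0\bigl((\alpha_Y)_g(a)\bigr)\bigr)\, d\mu(g), \qquad a \in A(Y).
\]
For the integral to make sense, I need to check that the integrand $g \mapsto \alpha_{g^{-1}}(s_0((\alpha_Y)_g(a)))$ is norm-continuous in $g$ for each fixed $a$; this follows from the point-norm continuity of $\alpha$ and $\alpha_Y$ together with the fact that $s_0$ is contractive (hence uniformly continuous on bounded sets), so that the Bochner integral in $A$ is well defined.

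\textbf{Step 3: Verifying the three required properties.} I would check each in turn:
\begin{enumerate}[label=(\roman*)]
\item \emph{$s$ is c.c.p.} For each $g \in G$, the map $a \mapsto \alpha_{g^{-1}} \circ s_0 \circ (\alpha_Y)_g$ is a composition of c.c.p.\ maps, hence c.c.p. Bochner integration against a probability measure preserves complete positivity and contractivity, so $s$ is c.c.p.
\item \emph{$s$ is a section.} Apply $\pi_Y$ and pull it inside the integral (it is a bounded linear map). Using the $C(X)$-linearity of $\alpha$, which yields $\pi_Y \circ \alpha_{g^{-1}} = (\alpha_Y)_{g^{-1}} \circ \pi_Y$, and the identity $\pi_Y \circ s_0 = \mathrm{id}_{A(Y)}$, the integrand collapses to $(\alpha_Y)_{g^{-1}} \circ (\alpha_Y)_g(a) = a$, so $\pi_Y(s(a)) = a$.
\item \emph{$s$ is equivariant.} For $h \in G$, compute $\alpha_h(s(a))$ by pulling $\alpha_h$ inside the integral and performing the change of variables $g \mapsto hg$, which is measure-preserving. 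The substitution turns $\alpha_{h g^{-1}} \circ s_0 \circ (\alpha_Y)_g$ into $\alpha_{g^{-1}} \circ s_0 \circ (\alpha_Y)_{hg} = \alpha_{g^{-1}} \circ s_0 \circ (\alpha_Y)_g \circ (\alpha_Y)_h$, giving $\alpha_h(s(a)) = s((\alpha_Y)_h(a))$.
\end{enumerate}

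The only mildly delicate point is the continuity of the integrand in Step 2, which is the technical hypothesis that makes the averaging construction run. Everything else is a routine verification, and nuclearity enters exactly once (and crucially) in Step 1 to guarantee the existence of any c.c.p.\ lift to begin with.
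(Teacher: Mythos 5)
Your proposal is correct and takes essentially the same approach as the paper: a Choi--Effros c.c.p.\ lift followed by Haar averaging, the paper's formula $s(b)=\int_G\alpha_g\bigl(\widetilde{s}((\alpha_Y)_{g^{-1}}(b))\bigr)\,d\mu(g)$ being yours after the measure-preserving substitution $g\mapsto g^{-1}$. One small nit in your equivariance check: the change of variables should be $g\mapsto gh$ rather than $g\mapsto hg$, so that $\alpha_{h(gh)^{-1}}=\alpha_{g^{-1}}$ and $(\alpha_Y)_{gh}=(\alpha_Y)_g\circ(\alpha_Y)_h$ (legitimate since compact groups are unimodular); with that fix everything goes through.
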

\begin{proof}
That there is a c.c.p. section $\widetilde{s} : A(Y)\to A$ follows from the Choi-Effros theorem (see \cite[Theorem C.3]{brown_ozawa}). Now we average over $G$ by defining
\[
s(b) := \int_G \alpha_g(\widetilde{s}((\alpha_Y)_{g^{-1}}(b)))d\mu(g)
\]
where $\mu$ is the normalized Haar measure on $G$. It is easy to see that this map satisfies the required properties.
\end{proof}

The next lemma is a strengthening of \cite[Proposition 1.6]{kirchberg_winter} that we need for our purposes.

\begin{lemma}\label{lem: strongly_decomposable_cover}
Let $X$ be a compact Hausdorff space with $\dim(X) \leq n$. Then, every finite open cover has a finite refinement $\mathcal{U}$ which may be decomposed as the disjoint union of subcollections
\[
\mathcal{U} = \mathcal{U}_0\sqcup\mathcal{U}_1\sqcup \ldots \sqcup \mathcal{U}_n
\]
such that, for each $0\leq i\leq n$,
\[
V_1,V_2 \in \mathcal{U}_i, V_1\neq V_2 \Rightarrow \overline{V_1}\cap \overline{V_2} = \emptyset.
\]
\end{lemma}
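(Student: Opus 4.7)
The plan is to reduce the statement to the analogous one in Kirchberg--Winter \cite[Proposition 1.6]{kirchberg_winter} where pairwise disjoint closures are replaced by pairwise disjoint sets, and then to upgrade it via a standard shrinking argument using the normality of $X$.

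First I would invoke \cite[Proposition 1.6]{kirchberg_winter} applied to the given finite open cover. This produces a finite refinement $\mathcal{V}$ with a decomposition
\[
\mathcal{V} = \mathcal{V}_0 \sqcup \mathcal{V}_1 \sqcup \cdots \sqcup \mathcal{V}_n
\]
such that, within each $\mathcal{V}_i$, distinct members satisfy $V_1 \cap V_2 = \emptyset$ (rather than $\overline{V_1} \cap \overline{V_2} = \emptyset$, which is what we need).

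Next, since $X$ is compact Hausdorff (hence normal) and $\mathcal{V}$ is a finite open cover, a standard shrinking lemma (proved by induction on the cardinality of $\mathcal{V}$ using Urysohn-type separation) produces an open cover $\{W_V : V \in \mathcal{V}\}$ of $X$ with $\overline{W_V} \subset V$ for every $V \in \mathcal{V}$. I would then set $\mathcal{U}_i := \{W_V : V \in \mathcal{V}_i\}$ for $0 \leq i \leq n$ and take $\mathcal{U} := \mathcal{U}_0 \sqcup \cdots \sqcup \mathcal{U}_n$. Since $W_V \subset V$ and each $V$ refines some set of the original cover, $\mathcal{U}$ is still a refinement of the original cover.

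Finally I would verify the disjoint-closure condition: if $V_1, V_2 \in \mathcal{V}_i$ are distinct, then $V_1 \cap V_2 = \emptyset$, and from $\overline{W_{V_1}} \subset V_1$ and $\overline{W_{V_2}} \subset V_2$ it follows that
\[
\overline{W_{V_1}} \cap \overline{W_{V_2}} \subset V_1 \cap V_2 = \emptyset,
\]
as required. There is no real obstacle here; the only point worth stating carefully is the shrinking lemma for finite open covers of a normal space, which is routine but is what allows us to pass from ``disjoint'' to ``disjoint closures''.
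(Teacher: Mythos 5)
Your proposal is correct and follows essentially the same route as the paper: apply Kirchberg--Winter's Proposition 1.6 to get a decomposable refinement with pairwise disjoint members in each subcollection, then shrink the cover using normality so that closures land inside the original sets, whence the closures are pairwise disjoint. The only cosmetic difference is that the paper writes out the inductive construction of the shrinking explicitly, whereas you cite it as a standard lemma.
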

We call such a finite refinement a \emph{strongly $n$-decomposable} cover.
\begin{proof}
In light of \cite[Proposition 1.6]{kirchberg_winter}, it suffices to show that, for any finite open cover $\mathcal{U} = \{U_1,U_2,\ldots, U_n\}$ of $X$, there is another open cover $\mathcal{V} = \{V_1,V_2,\ldots, V_n\}$ of $X$ such that $\overline{V_i} \subset U_i$ for all $1\leq i\leq n$. \\

So fix a finite open cover $\mathcal{U} = \{U_1, U_2,\ldots, U_n\}$, and assume, without loss of generality, that $n\geq 2$. We now construct $V_k$ for $k\geq 1$ inductively. Fix $1\leq k<n$, and suppose that we have constructed $\{V_i : i<k\}$ such that $\overline{V_i}\subset U_i$ for all $i<k$, and
\[
X = \left(\bigcup_{i<k} V_i\right) \cup \left(\bigcup_{j\geq k} U_j\right)
\]
with the understanding that the first union is the empty set if $k=1$. To construct $V_k$, we note that, if 
\[
W := \left(\bigcup_{i<k} V_i\right) \cup \left(\bigcup_{j\geq k+1} U_j\right)
\]
then $X\setminus W\subset U_k$. Since $X\setminus W$ is compact and $X$ is normal, there is an open set $V_k$ such that $X\setminus W\subset V_k \subset \overline{V_k}\subset U_k$. Then, observe that
\[
X = \left(\bigcup_{i\leq k} V_i\right) \cup \left(\bigcup_{j\geq k+1} U_j\right)
\]
so we may proceed in this way to construct the required cover $\mathcal{V}$.
\end{proof}

We now prove the main result of this section.

\begin{theorem}\label{thm: rokhlin_cx_algebra}
Let $X$ be a compact, Hausdorff space, $A$ be a separable, nuclear $C(X)$-algebra, and let $\alpha: G\to \text{Aut}_X(A)$ be an action of a compact, second countable group $G$ on $A$, acting by $C(X)$-linear automorphisms. Then
\[
\dim_{Rok}(\alpha) \leq (\dim(X) + 1)\left[(\sup_{x\in X} \dim_{Rok}(\alpha_x)) + 1\right] -1.
\]
\end{theorem}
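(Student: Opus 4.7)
The strategy is to verify the local characterization of Rokhlin dimension from \cref{lem: rokhlin_local}. Set $n := \dim(X)$ and $d := \sup_{x \in X} \dim_{Rok}(\alpha_x)$, both assumed finite (otherwise the inequality is vacuous). Fix a countable self-adjoint generating set $S \subset A$; since $\pi_x$ is surjective, $\pi_x(S)$ generates $A(x)$, and each $A(x)$ is separable. Given finite $F \subset S$, $K \subset C(G)$, and $\epsilon > 0$, I will build $(n+1)(d+1)$ c.c.p.\ maps forming a $((n+1)(d+1)-1, F, K, \epsilon)$-Rokhlin system.

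For each $x \in X$, apply \cref{lem: rokhlin_local} to $(A(x), \alpha_x)$ to obtain a $(d, \pi_x(F), K, \epsilon/3)$-Rokhlin system $\psi_0^x, \ldots, \psi_d^x : C(G) \to A(x)$. Apply \cref{lem: equivariant_section} (this is where nuclearity is used) to pick an equivariant c.c.p.\ section $s_x : A(x) \to A$, and set $\tilde\psi_j^x := s_x \circ \psi_j^x$. Since $s_x$ is equivariant and $\psi_j^x$ is equivariant up to $\epsilon/3$, condition (iii) holds globally on $A$ for $\tilde\psi_j^x$ with constant $\epsilon/3$. The remaining conditions (i), (ii), (iv) each have the form $\|b\|_{A(x)} < \epsilon/3$ for certain elements $b$ that arise as $\pi_x$ of a concrete element $\tilde b \in A$ built from the $\tilde\psi_j^x$ and $F$; by upper semi-continuity of $y \mapsto \|\tilde b(y)\|$, each such condition persists on an open neighborhood of $x$. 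Taking the intersection of the finitely many neighborhoods (one per condition/tuple from $F, K$), we obtain an open set $W_x \ni x$ on which all of (i), (ii), (iv) hold pointwise with error $< \epsilon/3$.

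By compactness of $X$, the cover $\{W_x\}_{x \in X}$ has a finite subcover, which by \cref{lem: strongly_decomposable_cover} admits a strongly $n$-decomposable refinement $\mathcal{U} = \mathcal{U}_0 \sqcup \cdots \sqcup \mathcal{U}_n$; to each $V \in \mathcal{U}$ assign a base point $x_V$ with $\overline{V} \subset W_{x_V}$. Let $\{h_V\}_{V \in \mathcal{U}}$ be a partition of unity subordinate to $\mathcal{U}$. Within each layer $\mathcal{U}_i$, the supports of $\{h_V\}$ are pairwise disjoint (closures are disjoint by hypothesis). For $0 \leq i \leq n$ and $0 \leq j \leq d$, define
\[
\psi_{i,j}(f) := \sum_{V \in \mathcal{U}_i} h_V \, \tilde\psi_j^{x_V}(f).
\]
Each $\psi_{i,j}$ is c.c.p.\ (within each $\mathcal{U}_i$, at most one summand is nonzero pointwise, so the sum is contractive and clearly completely positive). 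Further, since $\alpha$ acts by $C(X)$-linear automorphisms, multiplication by $h_V$ commutes with $\alpha_s$, so the equivariance bound for $\psi_{i,j}$ reduces to that of each $\tilde\psi_j^{x_V}$.

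Verification of the Rokhlin conditions uses $\|a\|_A = \sup_{y \in X} \|a(y)\|$ for $C(X)$-algebras. For (i) and (ii), at each $y$ only a single $V \in \mathcal{U}_i$ contributes to $\psi_{i,j}(y)$ (within the given layer), and since $y \in \overline{V} \subset W_{x_V}$, the corresponding fiber bound gives $\|\psi_{i,j}(f_1)\psi_{i,j}(f_2)(y)\| < \epsilon/3$ and $\|[\psi_{i,j}(f),a](y)\| < \epsilon/3$. For (iv), summing across layers uses $\sum_{i, V \in \mathcal{U}_i} h_V \equiv 1$ to write
\[
\left(\sum_{i,j} \psi_{i,j}(1)a - a\right)(y) \;=\; \sum_{i}\sum_{V \in \mathcal{U}_i, y \in V} h_V(y)\left(\sum_j \tilde\psi_j^{x_V}(1)(y) a(y) - a(y)\right),
\]
a pointwise convex combination of local errors each of norm $< \epsilon/3$, hence bounded in norm by $\epsilon/3$. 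The main technical obstacle is the careful bookkeeping: one must show that all four Rokhlin conditions at a point $x$ can be promoted, via upper semi-continuity, to a single common open neighborhood $W_x$, and then that the combinatorial structure of the strongly $n$-decomposable cover transforms a $(d+1)$-tower local system into an $(n+1)(d+1)$-tower global one without losing the bounds.
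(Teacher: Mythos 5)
Your proof is correct and follows the same overall architecture as the paper's: reduce to the local characterization of \cref{lem: rokhlin_local}, lift fiberwise Rokhlin systems, propagate the finitely many norm estimates to neighborhoods via upper semi-continuity of $y\mapsto\|b(y)\|$, pass to a strongly $n$-decomposable refinement via \cref{lem: strongly_decomposable_cover}, and glue with a partition of unity and equivariant sections. There are, however, two points where your execution genuinely diverges, both to good effect. First, you invoke \cref{lem: equivariant_section} already at the level of singletons, setting $\tilde\psi_j^x := s_x\circ\psi_j^x$ with $s_x$ a $G$-equivariant section; since then $\alpha_h\circ\tilde\psi_j^x = s_x\circ(\alpha_x)_h\circ\psi_j^x$, the approximate equivariance estimate holds for \emph{all} $h\in G$ at once. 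The paper instead lifts via Choi--Effros (a non-equivariant lift), and must then run a separate compactness argument in $G$ (a finite net $M\subset G$ plus upper semi-continuity for each $s\in M$) to upgrade the equivariance estimate from finitely many group elements to all of $G$; your choice of lift makes that whole step unnecessary. Second, you verify the glued estimates fiberwise using $\|a\|=\sup_y\|a(y)\|$ and the pointwise disjointness of supports within each layer $\mathcal{U}_i$, with the unit condition handled as a pointwise convex combination of local errors; the paper instead routes through the isomorphism $A(\overline{V_i})\cong\bigoplus_j A(\overline{V_{i,j}})$ and the identity $f_i s_i(b_1b_2)=f_i s_i(b_1)s_i(b_2)$, and pays a factor of $(n+1)$ in the unit estimate (hence its initial choice $\epsilon=\eta/(n+1)$), which your convex-combination argument avoids. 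One cosmetic slip: a refinement only guarantees $V\subset W_{x_V}$, not $\overline{V}\subset W_{x_V}$ as you wrote, but this is harmless since your pointwise estimates only ever use $y\in\operatorname{supp}(h_V)\subset V\subset W_{x_V}$.
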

\begin{proof}
Assume, without loss of generality, that
\[
n := \dim(X) < \infty \text{ and } d := \sup_{x\in X}\dim_{Rok}(\alpha_x) < \infty.
\]
Our goal is to verify \cref{lem: rokhlin_local}. Fix finite sets $F\subset A, K\subset C(G)$ and $\eta > 0$. Set
\[
\epsilon := \frac{\eta}{(n+1)}.
\]
For $x\in X$ fixed, then there are $(d+1)$ c.c.p. maps $\psi_0, \psi_1, \ldots, \psi_d : C(G)\to A(x)$ which form a $(d,\pi_x(F), K, \epsilon/3)$-Rokhlin system. Since $G$ is second countable, $C(G)$ is separable. Therefore, by the Choi-Effros theorem \cite[Theorem C.3]{brown_ozawa}, there are c.c.p. maps
\[
\widetilde{\psi_k} : C(G)\to A,\quad 0\leq k\leq d
\]
such that $\pi_x\circ \widetilde{\psi_k} = \psi_k$ for all $0\leq k\leq d$.
\begin{enumerate}[label=(\roman*)]
\item Now note that, if $f_1,f_2\in K$ with $f_1\perp f_2$, then
\[
\|\pi_x(\widetilde{\psi_k}(f_1))\pi_x(\widetilde{\psi_k}(f_2))\| < \frac{\epsilon}{3}.
\]
For $b := \widetilde{\psi_k}(f_1)\widetilde{\psi_k}(f_2)$, the map $\Gamma_b$ defined above is upper semi-continuous, so there is an open set $W_1$ containing $x$ such that
\[
\|\pi_y(\widetilde{\psi_k}(f_1))\pi_y(\widetilde{\psi_k})(f_2)\| < \epsilon
\]
for all $y\in W_1$. Choose an open set $V_1$ containing $x$ such that $\overline{V_1} \subset W_1$. Then, by \cite[Lemma 2.1]{mdd_finite},
\[
\|\pi_{\overline{V_1}}(\widetilde{\psi_k}(f_1))\pi_{\overline{V_1}}(\widetilde{\psi_k}(f_2))\| < \epsilon.
\]
Since $K$ is finite, we may arrange that this happens for all pairs $f_1,f_2 \in K$ such that $f_1\perp f_2$.
\item Similarly, since
\[
\|\left(\sum_{k=0}^d \pi_x(\widetilde{\psi_k}(1_{C(G)}))\right)a(x) - a(x)\| < \frac{\epsilon}{3}
\]
for all $a\in F$, by upper semi-continuity of the maps $\{\Gamma_b : b\in A\}$, there is an open set $V_2$ containing $x$ such that
\[
\|\left(\sum_{k=0}^d \pi_{\overline{V_2}}(\widetilde{\psi_k}(1_{C(G)}))\right)\pi_{\overline{V_2}}(a) - \pi_{\overline{V_2}}(a)\| < \epsilon
\]
for all $a\in F$. 
\item For a fixed $0\leq k\leq d$ and $f\in F$, the maps $h\mapsto \alpha_h\circ \widetilde{\psi}_k(f)$ and $h\mapsto \sigma_h(f)$ are continuous maps from $G$ to $A$ and $G$ to $C(G)$ respectively. Hence, for each $s\in G$, there is an open set $U_s$ containing $s$ such that $\|\alpha_s(\widetilde{\psi}_k(f)) - \alpha_t(\widetilde{\psi}_k(f))\| < \frac{\epsilon}{3}$, and $\|\sigma_s(f) - \sigma_t(f)\|_{\infty} < \frac{\epsilon}{3}$ for all $t\in U_s$ and $0\leq k\leq d, f\in F$. By compactness, we may choose a finite set $M \subset G$ such that $G = \bigcup_{s\in M} U_s$. Now, for each $s\in M$, we have 
\[
\|(\alpha_x)_s(\pi_x(\widetilde{\psi_k}(f))) - \pi_x(\widetilde{\psi_k}(\sigma_s(f)))\| < \frac{\epsilon}{3}
\]
for all $0\leq k\leq d$ and $f\in F$. Since $\alpha_s$ is $C(X)$-linear, we have $(\alpha_x)_s \circ \pi_x = \pi_x\circ \alpha_s$. So, we may once again use the upper semi-continuity of the maps $\{\Gamma_b : b\in A\}$ to obtain an open set $W_s$ containing $x$ such that
\[
\|\pi_{\overline{W_s}}\left( \alpha_s(\widetilde{\psi_k}(f)) - \widetilde{\psi_k}(\sigma_s(f))\right)\| < \frac{\epsilon}{3}.
\]
Take $V_3 := \bigcap_{s\in M} W_s$, then $V_3$ is open, and
\[
\|\pi_{\overline{V_3}}\left( \alpha_s(\widetilde{\psi_k}(f)) - \widetilde{\psi_k}(\sigma_s(f))\right)\| < \frac{\epsilon}{3}
\]
for each $s\in M$. Now, for any $t\in G$, choose $s\in M$ such that $t \in U_s$. Then, for any $f\in F$, we have
\begin{equation*}
\begin{split}
\|(\alpha_{\overline{V_3}})_t(\pi_{\overline{V_3}}(\widetilde{\psi_k}(f))) - \pi_{\overline{V_3}}(\widetilde{\psi_k}(\sigma_t(f)))\| < \epsilon
\end{split}
\end{equation*}
by the triangle inequality, the fact that $(\alpha_{\overline{V_3}})_t\circ \pi_{\overline{V_3}} = \pi_{\overline{V_3}}\circ \alpha_t$, and that $\pi_{\overline{V_3}}\circ \widetilde{\psi_k}$ is contractive.
\item Finally, using upper semi-continuity as before, there is an open set $V_4$ such that
\[
\|[\pi_{\overline{V_4}}(\widetilde{\psi_k}(f)), \pi_{\overline{V_4}}(a)]\| < \epsilon 
\]
for all $a\in F, f\in K$ and $0\leq k\leq d$.
\end{enumerate}
Therefore, if we set $V_x := V_1\cap V_2\cap V_3\cap V_4$, then we see that, for each $x\in X$, there is an open set $V_x$ containing $x$ and $(d+1)$ c.c.p. maps
\[
\psi^{(k)}_x = \pi_{\overline{V_x}}\circ \widetilde{\psi_k} : C(G)\to A(\overline{V_x})
\]
for $0\leq k\leq d$, which form a $(d, \pi_{\overline{V_x}}(F), K, \epsilon)$-Rokhlin system. \\

Now, begin with a finite subcover of $\{V_x : x\in X\}$, and choose a strongly $n$-decomposable refinement of this cover (by \cref{lem: strongly_decomposable_cover}), written as $\mathcal{U} = \mathcal{U}_0\sqcup \mathcal{U}_1 \sqcup \ldots \sqcup \mathcal{U}_n$, where $\mathcal{U}_i = \{V_{i,1}, V_{i,2}, \ldots, V_{i,k_i}\}$, and $\overline{V_{i,j_1}} \cap \overline{V_{i,j_2}} = \emptyset$ if $j_1\neq j_2$. For each $0\leq i\leq n$ and $1\leq j\leq k_i$, there are $(d+1)$ c.c.p. maps
\[
\psi^{(k)}_{i,j} : C(G)\to A(\overline{V_{i,j}}), \quad 0\leq k\leq d
\]
which form a $(d,\pi_{\overline{V_{i,j}}}(F), K,\epsilon)$-Rokhlin system. Write $V_i = \sqcup_{j=1}^{k_i} V_{i,j}$, so that $\overline{V_i} = \sqcup_{j=1}^{k_i} \overline{V_{i,j}}$. By \cite[Lemma 2.4]{mdd_finite},
\[
A(\overline{V_i}) \cong \bigoplus_{j=1}^{k_i} A(\overline{V_{i,j}}).
\]
So, we get $(d+1)$ c.c.p. maps
\[
\psi_i^{(k)} : C(G)\to A(\overline{V_i}), \qquad 0\leq k\leq d
\]
which form a $(d,\pi_{\overline{V_i}}(F), K,\epsilon)$-Rokhlin system. \\

Now, choose a partition of unity $\{f_i : 0\leq i\leq n\}$ subordinate to this open cover $\{V_i: 0\leq i\leq n\}$ and $G$-equivariant, c.c.p. sections $s_i : A(\overline{V_i}) \to A$ satisfying $\pi_{\overline{V_i}}\circ s_i = \text{id}_{A(\overline{V_i})}$ using \cref{lem: equivariant_section}. Define $\varphi_{i,k} : C(G)\to A$ by
\[
f \mapsto f_is_i(\psi_i^{(k)}(f)).
\]
Then each $\varphi_{i,k}$ is an c.c.p. map. We claim that the $\{\varphi_{i,k}\}$ form a $((d+1)(n+1)-1, F,K,\eta)$-Rokhlin system.
\begin{enumerate}[label=(\roman*)]
\item If $f_1,f_2\in K$ with $f_1\perp f_2$. If $b_1 := \psi_i^{(k)}(f_1)$ and $b_2 := \psi_i^{(k)}(f_2)$, then $\|b_1b_2\| < \epsilon$. Furthermore,
\[
\pi_{\overline{V_i}}(s_i(b_1b_2) - s_i(b_1)s_i(b_2)) = 0.
\]
Hence, $s_i(b_1b_2) - s_i(b_1)s_i(b_2) \in C(X,\overline{V_i})A$. Since $f_i \in C_0(V_i)$, we have $f_is_i(b_1b_2) = f_is_i(b_1)s_i(b_2)$. Hence,
\begin{equation*}
\begin{split}
\|\varphi_{i,k}(f_1)\varphi_{i,k}(f_2)\| &= \|f_i^2s_i(b_1)s_i(b_2)\| = \|f_i^2s_i(b_1b_2)\| \leq \|b_1b_2\| < \epsilon.
\end{split}
\end{equation*}
\item We wish to show that
\[
\|\left(\sum_{i=0}^n \sum_{k=0}^d \varphi_{i,k}(1_{C(G)})\right)a - a\| < \eta
\]
for all $a\in F$. Note that $\pi_{\overline{V_i}}(a - s_i(\pi_{\overline{V_i}}(a))) = 0$, so $f_ia = f_is_i(\pi_{\overline{V_i}}(a))$. Hence,
\begin{equation*}
\begin{split}
\left\|\sum_{i,k} \varphi_{i,k}(1_{C(G)})a - a\right\| &= \left\|\left(\sum_{i=0}^n\sum_{k=0}^d f_is_i(\psi_i^{(k)}(1_{C(G)}))\right)a -a\right\|\\
&= \left\|\left(\sum_{i=0}^n f_i\sum_{k=0}^d s_i(\psi_i^{(k)}(1_{C(G)}))s_i(\pi_{\overline{V_i}}(a))\right) - \left(\sum_{i=0}^n f_i\right)a\right\| \\
&= \left\|\sum_{i=0}^n f_i\left(\sum_{k=0}^d s_i(\psi_i^{(k)}(1_{C(G)}))s_i(\pi_{\overline{V_i}}(a)) - s_i(\pi_{\overline{V_i}}(a)) \right)\right\|.
\end{split}
\end{equation*}
As argued above, we have
\[
f_is_i(\psi_i^{(k)}(1_{C(G)}))s_i(\pi_{\overline{V_i}}(a)) = f_is_i(\psi_i^{(k)}(1_{C(G)})\pi_{\overline{V_i}}(a)).
\]
Hence,
\begin{equation*}
\begin{split}
\left\|\sum_{i,k} \varphi_{i,k}(1_{C(G)})a - a\right\| &= \left\| \sum_{i=0}^n f_i\left(\sum_{k=0}^d s_i(\psi_i^{(k)}(1_{C(G)})\pi_{\overline{V_i}}(a)) - s_i(\pi_{\overline{V_i}}(a)) \right)\right\|\\
&\leq (n+1)\left\|\sum_{k=0}^d \psi_i^{(k)}(1_{C(G)})\pi_{\overline{V_i}}(a) - \pi_{\overline{V_i}}(a)\right\| \\
&< (n+1)\epsilon = \eta.
\end{split}
\end{equation*}
\item We need to verify that $\|[\varphi_{i,k}(f), a]\| < \eta$ for all $f\in K, a\in F$. Once again, if $b_1 := \psi_i^{(k)}(f)$ and $b_2 := \pi_{\overline{V_i}}(a)$, then $\|[b_1,b_2]\| < \epsilon$. Furthermore, $f_ia = f_is_i(\pi_{\overline{V_i}}(a)) = f_is_i(b_2)$, and $f_is_i(b_1b_2) = f_is_i(b_1)s_i(b_2)$. Hence,
\begin{equation*}
\begin{split}
\|[\varphi_{i,k}(f), a]\| &= \left\|f_is_i(\psi_i^{(k)}(f))a - f_ias_i(\psi_i^{(k)}(f)) \right\| \\
&= \left\|f_is_i(b_1)s_i(b_2) - f_is_i(b_2)s_i(b_1) \right\| \\
&= \|f_is_i(b_1b_2) - f_is_i(b_2b_1)\| \\
&\leq \|b_1b_2-b_2b_1\| < \epsilon < \eta.
\end{split}
\end{equation*}
\item We need to verify that $\|\alpha_h(\varphi_{i,k}(f)) - \varphi_{i,k}(\sigma_h(f))\| < \eta$ for all $h\in G, f\in K$. But note that
\[
\alpha_h\circ s_i\circ \psi_i^{(k)}(f) = s_i\circ (\alpha_{\overline{V_i}})_h\circ \psi_i^{(k)}(f).
\]
Hence,
\begin{equation*}
\begin{split}
\|\alpha_h(\varphi_{i,k}(f)) - \varphi_{i,k}(\sigma_h(f))\| &= \left\|f_i\alpha_h\circ s_i\circ \psi_i^{(k)}(f) -  f_is_i\circ \psi_i^{(k)}(\sigma_h(f))\right\| \\
&= \left\|f_i\left(s_i\circ (\alpha_{\overline{V_i}})_h\circ\psi_i^{(k)}(f) - s_i\circ \psi_i^{(k)}(\sigma_h(f))\right)\right\| \\
&= \left\|f_i\left(s_i((\alpha_{\overline{V_i}})_h(\psi_i^{(k)}(f)))) - s_i(\psi_i^{(k)}(\sigma_h(f)))\right) \right\| \\
&\leq \|(\alpha_{\overline{V_i}})_h\circ\psi_i^{(k)}(f) - \psi_i^{(k)}(\sigma_h(f))\| < \epsilon \\
&<\eta.
\end{split}
\end{equation*}
\end{enumerate}
Thus, we have constructed $(n+1)(d+1)$ c.c.p. maps $\varphi_{i,k} : C(G)\to A$ forming a $((d+1)(n+1)-1,F,K,\eta)$-Rokhlin system. Since $A$ is separable, we conclude by \cref{lem: rokhlin_local} that
\[
\dim_{Rok}(\alpha) \leq (n+1)(d+1) - 1
\]
as required.
\end{proof}

\section{C*-Correspondences}\label{sec: correspondences}

In this section, we study a natural situation in which an action of a group on a C*-correspondence may induce an action of finite Rokhlin dimension on the associated Cuntz-Pimsner algebra. To begin with, we briefly review the basic notion of a C*-correspondence and the associated Pimsner algebras. For further details, the reader is referred to \cite[Section 4.6]{brown_ozawa} or \cite{katsura}. \\

Let $A$ be a C*-algebra. A C*-correspondence over $A$ is a triple $(\mathcal{H},A,\pi_\mathcal{H})$, where $\mathcal{H}$ is a right Hilbert $A$-module, and $\pi_\mathcal{H} : A\to \mathcal{B}(\mathcal{H})$ is a $\ast$-representation of $A$ on the algebra of adjointable operators on $\mathcal{H}$. For $\xi, \eta \in \mathcal{H}$, we write $\theta_{\xi,\eta} \in \mathcal{B}(\mathcal{H})$ for the operator $\zeta \mapsto \xi\langle \eta,\zeta\rangle$, and write $\mathcal{K}(\mathcal{H})$ for closed linear span of $\{\theta_{\xi,\eta} : \xi, \eta \in \mathcal{H}\}$. This is a closed two-sided ideal of $\mathcal{B}(\mathcal{H})$. \\

A \emph{representation} of the triple $(\mathcal{H},A,\pi_\mathcal{H})$ on a C*-algebra $B$ is a pair $(\pi, \tau)$, where $\pi : A\to B$ is a $\ast$-homomorphism, and $\tau : \mathcal{H}\to B$ is a linear map such that
\[
\tau(\pi_\mathcal{H}(a)\xi) = \pi(a)\tau(\xi), \text{ and } \tau(\xi)^{\ast}\tau(\eta) = \pi(\langle \xi,\eta)\rangle).
\]
Write $C^{\ast}(\pi,\tau)$ for the C*-subalgebra of $B$ generated by $\pi(A)\cup \tau(\mathcal{H})$. A representation $(\widetilde{\pi}, \widetilde{\tau})$ is said to be \emph{universal} if, for any other representation $(\pi,\tau)$ of $(\mathcal{H},A,\pi_\mathcal{H})$, there is a $\ast$-homomorphism $\mu: C^{\ast}(\widetilde{\pi},\widetilde{\tau})\to C^{\ast}(\pi,\tau)$ such that
\[
\mu\circ \widetilde{\pi} = \pi \text{ and } \mu\circ \widetilde{\tau} = \tau.
\]
It is a fact that such a universal representation always exists. The C*-algebra $C^{\ast}(\widetilde{\pi}, \widetilde{\tau})$ is called is the (augmented) Toeplitz-Pimsner algebra associated to $(\mathcal{H},A,\pi_\mathcal{H})$, and is denoted by $\mathcal{T}_\mathcal{H}$. \\

Given a representation $(\pi,\tau)$ of $(\mathcal{H},A,\pi_\mathcal{H})$ on a C*-algebra $B$, there is a unique $\ast$-homomorphism $\sigma_{\tau} : \mathcal{K}(\mathcal{H})\to B$ satisfying $\sigma_{\tau}(\theta_{\xi,\eta}) = \tau(\xi)\tau(\eta)^{\ast}$. We say that the representation is \emph{covariant} if
\[
\pi(a) = \sigma_{\tau}(\pi_\mathcal{H}(a))
\]
for all $a\in J_\mathcal{H}$, where $J_\mathcal{H}$ is defined by
\[
J_\mathcal{H} := \{a\in A:\pi_\mathcal{H}(a) \in \mathcal{K}(\mathcal{H}) \text{ and } ab = 0 \text{ for all } b\in \ker(\pi_\mathcal{H})\}.
\]
A covariant representation $(\widehat{\pi}, \widehat{\tau})$ is said to be \emph{universal} if, for any other covariant representation $(\pi,\tau)$ of $(\mathcal{H},A,\pi_\mathcal{H})$, there is a $\ast$-homomorphism $\nu : C^{\ast}(\widehat{\pi},\widehat{\tau}) \to C^{\ast}(\pi,\tau)$ satisfying
\[
\nu\circ \widehat{\pi} = \pi \text{ and } \nu\circ \widehat{\tau} = \tau.
\]
Once again, such a universal covariant representation always exists. The C*-algebra $C^{\ast}(\widehat{\pi},\widehat{\tau})$ is called the (augmented) Cuntz-Pimsner algebra associated to $(\mathcal{H},A,\pi_\mathcal{H})$, and is denoted by $\mathcal{O}_\mathcal{H}$.\\

We now recall the notion of a group action on a C*-correspondence (see, for instance, \cite[Definition 2.1]{hao_ng}).

\begin{definition}
A continuous action of a locally compact group $G$ on a C*-correspondence $(\mathcal{H},A,\pi_\mathcal{H})$ is a pair $(\alpha,\gamma)$, where $(A,G,\alpha)$ is a C*-dynamical system, and $\gamma : G\to \text{Aut}(\mathcal{H})$ is a group homomorphism such that, for any $s\in G, \xi, \eta \in \mathcal{H}$ and $a\in A$, 
\begin{enumerate}[label=(\roman*)]
\item The map $t\mapsto \gamma_t(\xi)$ is a continuous map from $G$ to $\mathcal{H}$,
\item $\langle \gamma_s(\xi),\gamma_s(\eta)\rangle = \alpha_s(\langle \xi,\eta\rangle)$,
\item $\gamma_s(\xi\cdot a) = \gamma_s(\xi)\cdot \alpha_s(a)$,
\item $\gamma_s(\pi_\mathcal{H}(a)\xi) = \pi_\mathcal{H}(\alpha_s(a))\gamma_s(\xi)$.
\end{enumerate}
\end{definition}

The proof of the next lemma follows directly from the universal properties of the Pimsner algebras mentioned above, and its proof has been discussed elsewhere (see \cite[Corollary 4.6.22]{brown_ozawa} and \cite[Lemma 2.6]{hao_ng}). We omit the details here.

\begin{lemma}\label{lem: action_pimsner}
Let $(\alpha, \gamma)$ be an action of a locally compact group $G$ on a C*-correspondence $(\mathcal{H},A,\pi_\mathcal{H})$. Then,
\begin{enumerate}[label=(\roman*)]
\item There is a unique action $\mu : G\to \text{Aut}(\mathcal{T}_\mathcal{H})$ such that
\[
\mu_s\circ \widetilde{\pi} = \widetilde{\pi}\circ \alpha_s \text{ and } \mu_s\circ \widetilde{\tau} = \widetilde{\tau} \circ \gamma_s
\]
for all $s\in G$.
\item There is a unique action $\beta : G\to \text{Aut}(\mathcal{O}_\mathcal{H})$ such that
\[
\beta_s\circ \widehat{\pi} = \widehat{\pi}\circ \alpha_s \text{ and } \beta_s\circ \widehat{\tau} = \widehat{\tau}\circ \gamma_s
\]
for all $s\in G$.
\end{enumerate}
\end{lemma}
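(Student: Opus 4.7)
The plan is to build $\mu_s$ and $\beta_s$ individually out of the universal properties of $\mathcal{T}_\mathcal{H}$ and $\mathcal{O}_\mathcal{H}$, then upgrade pointwise homomorphisms to a continuous action. Fix $s \in G$ and set $\pi_s := \widetilde{\pi} \circ \alpha_s$ and $\tau_s := \widetilde{\tau} \circ \gamma_s$. Conditions (ii) and (iv) of the definition of a correspondence action give at once that $\tau_s(\pi_\mathcal{H}(a)\xi) = \pi_s(a)\tau_s(\xi)$ and $\tau_s(\xi)^\ast \tau_s(\eta) = \pi_s(\langle \xi, \eta\rangle)$, so $(\pi_s, \tau_s)$ is a representation of $(\mathcal{H}, A, \pi_\mathcal{H})$ on $\mathcal{T}_\mathcal{H}$. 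Universality of $(\widetilde{\pi}, \widetilde{\tau})$ then produces a unique $\ast$-homomorphism $\mu_s : \mathcal{T}_\mathcal{H} \to \mathcal{T}_\mathcal{H}$ with $\mu_s \circ \widetilde{\pi} = \pi_s$ and $\mu_s \circ \widetilde{\tau} = \tau_s$. Applying the same argument with $s^{-1}$ in place of $s$ and invoking uniqueness shows $\mu_s \circ \mu_{s^{-1}} = \text{id} = \mu_{s^{-1}} \circ \mu_s$, so $\mu_s \in \text{Aut}(\mathcal{T}_\mathcal{H})$, and a second application of uniqueness gives $\mu_{st} = \mu_s \circ \mu_t$.

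For the Cuntz-Pimsner part, the key extra ingredient is that Katsura's ideal $J_\mathcal{H}$ is $\alpha$-invariant and that covariance is preserved under $(\pi_s, \tau_s)$. From condition (iv), $\pi_\mathcal{H}(\alpha_s(a)) = \gamma_s \pi_\mathcal{H}(a) \gamma_{s^{-1}}$ on $\mathcal{H}$, where $\gamma_s$ is $A$-linear in the sense of (iii) and isometric by (ii); in particular conjugation by $\gamma_s$ preserves $\mathcal{K}(\mathcal{H})$, so $\pi_\mathcal{H}(a) \in \mathcal{K}(\mathcal{H})$ implies $\pi_\mathcal{H}(\alpha_s(a)) \in \mathcal{K}(\mathcal{H})$. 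The same identity shows $\alpha_s(\ker \pi_\mathcal{H}) = \ker \pi_\mathcal{H}$, and then $a b = 0$ for all $b \in \ker\pi_\mathcal{H}$ translates, after applying $\alpha_s$, to $\alpha_s(a) b = 0$ for all $b \in \ker\pi_\mathcal{H}$. Hence $\alpha_s(J_\mathcal{H}) = J_\mathcal{H}$. Now a direct calculation $\sigma_{\tau_s}(\theta_{\xi,\eta}) = \tau_s(\xi)\tau_s(\eta)^\ast = \widehat{\tau}(\gamma_s(\xi))\widehat{\tau}(\gamma_s(\eta))^\ast = \sigma_{\widehat{\tau}} \circ \mathrm{Ad}(\gamma_s)(\theta_{\xi,\eta})$ combined with the identity above shows $\sigma_{\tau_s}(\pi_\mathcal{H}(a)) = \widehat{\pi}(\alpha_s(a)) = \pi_s(a)$ for $a \in J_\mathcal{H}$; thus $(\pi_s, \tau_s) := (\widehat{\pi} \circ \alpha_s, \widehat{\tau}\circ \gamma_s)$ is covariant on $\mathcal{O}_\mathcal{H}$, and the universal property of $(\widehat{\pi}, \widehat{\tau})$ yields $\beta_s \in \text{Aut}(\mathcal{O}_\mathcal{H})$ with the desired properties.

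It remains to check that $s \mapsto \mu_s$ and $s \mapsto \beta_s$ are continuous in the point-norm topology. The set of $x \in \mathcal{T}_\mathcal{H}$ for which $s \mapsto \mu_s(x)$ is norm-continuous is a closed $\ast$-subalgebra (because each $\mu_s$ is an isometry, so continuity of $s\mapsto \mu_s(x)$ and $s \mapsto \mu_s(y)$ yields continuity of $s \mapsto \mu_s(xy)$ and $s \mapsto \mu_s(x+y)$). By construction, this set contains $\widetilde{\pi}(A)$ and $\widetilde{\tau}(\mathcal{H})$, whose continuity follows from the hypothesized continuity of $\alpha$ and of $t \mapsto \gamma_t(\xi)$ for each $\xi$; these generate $\mathcal{T}_\mathcal{H}$, so the set is all of $\mathcal{T}_\mathcal{H}$. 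The same argument applies verbatim to $\beta$.

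The only step I would flag as substantive is the $\alpha$-invariance of $J_\mathcal{H}$ together with the verification that $(\pi_s, \tau_s)$ remains covariant; the rest is an essentially formal invocation of universality followed by the standard closed-subalgebra continuity argument. Everything else (that $\mu_s$ and $\beta_s$ are automorphisms, that $s \mapsto \mu_s$ and $s \mapsto \beta_s$ are group homomorphisms, and uniqueness) is immediate from applying the universal property twice.
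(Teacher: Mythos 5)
Your proof is correct and follows exactly the route the paper indicates (and leaves to the references \cite{brown_ozawa} and \cite{hao_ng}): apply the universal property pointwise to $(\widetilde{\pi}\circ\alpha_s,\widetilde{\tau}\circ\gamma_s)$, resp.\ $(\widehat{\pi}\circ\alpha_s,\widehat{\tau}\circ\gamma_s)$, deduce the automorphism and homomorphism properties from uniqueness, and get point-norm continuity from the closed-subalgebra argument on the generators. You correctly identify the one substantive point the paper glosses over, namely the $\alpha$-invariance of $J_{\mathcal{H}}$ and the preservation of covariance via $\sigma_{\tau_s}=\sigma_{\widehat{\tau}}\circ\mathrm{Ad}(\gamma_s)$ on $\mathcal{K}(\mathcal{H})$, and your verification of it is sound.
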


The next definition is meant to ensure that an action on a C*-correspondence induces an action of finite Rokhlin dimension on the associated Pimsner algebras.

\begin{definition}\label{definition: rokhlin_correspondence_compact}
Let $(\alpha, \gamma)$ be an action of a compact group $G$ on a C*-correspondence $(\mathcal{H},A, \pi_\mathcal{H})$, and let $d\geq 0$ be a fixed integer. We say that $(\alpha,\gamma)$ is \emph{$d$-decomposable} if, for every finite set $F_1\subset A, F_2\subset \mathcal{H}, K\subset C(G),$ and every $\epsilon > 0$, there exist $(d+1)$ c.c.p. maps
\[
\varphi_0, \varphi_1, \ldots, \varphi_d : C(G)\to A
\]
satisfying the following conditions:
\begin{enumerate}[label=(\roman*)]
\item If $f_1,f_2\in K$ are orthogonal, then $\|\varphi_j(f_2)\varphi_j(f_2)\| < \epsilon$.
\item If $a \in F_1$, then $\|\left(\sum_{j=0}^d \varphi_j(1_{C(G)})\right)a - a\| < \epsilon$.
\item If $\xi \in F_2$, then
\begin{equation*}
\begin{split}
\|\pi_\mathcal{H}\left(\sum_{j=0}^d \varphi_j(1_{C(G)})\right)\xi - \xi\| &< \epsilon, \text{ and} \\
\|\xi\left(\sum_{j=0}^d \varphi_j(1_{C(G)})\right) -\xi\| &< \epsilon.
\end{split}
\end{equation*}
\item For all $a\in F_1$, and all $f\in K$, $\|[\varphi_j(f),a]\| < \epsilon$.
\item For all $\xi \in F_2$, and $f\in K$, $\|\pi_\mathcal{H}(\varphi_j(f))\xi - \xi\cdot \varphi_j(f)\| < \epsilon$.
\item For all $f\in K$ and $h\in G$, $\|\alpha_h(\varphi_j(f)) - \varphi_j(\sigma_h(f))\| < \epsilon$.
\end{enumerate}
\end{definition}

We now show that a $d$-decomposable action induces an action of finite Rokhlin dimension on the associated Pimsner algebras. Recall that, if $(\pi,\tau)$ is a representation of $(\mathcal{H},A,\pi_\mathcal{H})$, then $\tau(\xi)\pi(a) = \tau(\xi\cdot a)$, and $\|\tau(\xi)\| \leq \|\xi\|$ for all $\xi \in \mathcal{H}$ and $a\in A$. We use these facts below.

\begin{prop}\label{prop: rokhlin_pimsner}
Let $(\mathcal{H},A,\pi_\mathcal{H})$ be a Hilbert C*-correspondence, where $A$ is a separable C*-algebra and $\mathcal{H}$ a countably generated Hilbert $A$-module. Let $(\alpha,\gamma)$ be an action of a compact group $G$ on $(\mathcal{H},A,\pi_\mathcal{H})$, and let $\mu: G\to \text{Aut}(\mathcal{T}_\mathcal{H})$ and $\beta : G\to \text{Aut}(\mathcal{O}_\mathcal{H})$ denote the induced actions on Pimsner algebras given by \cref{lem: action_pimsner}. If $(\alpha,\gamma)$ is $d$-decomposable, then
\[
\dim_{Rok}(\mu) \leq d \text{ and } \dim_{Rok}(\beta) \leq d.
\]
\end{prop}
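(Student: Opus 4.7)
The plan is to apply the local characterization of Rokhlin dimension (\cref{lem: rokhlin_local}) directly, using the $d$-decomposability of $(\alpha,\gamma)$ to build a Rokhlin system on $\mathcal{T}_\mathcal{H}$ (and, in exactly the same way, on $\mathcal{O}_\mathcal{H}$). Because $A$ is separable and $\mathcal{H}$ is countably generated, I can fix a countable dense self-adjoint set $A_0 \subset A$ and a countable generating set $\mathcal{H}_0 \subset \mathcal{H}$, and then take as a generating set for $\mathcal{T}_\mathcal{H}$ the countable self-adjoint set
\[
S := \widetilde{\pi}(A_0) \cup \widetilde{\tau}(\mathcal{H}_0) \cup \widetilde{\tau}(\mathcal{H}_0)^{\ast}.
\]
Similarly for $\mathcal{O}_\mathcal{H}$ using $\widehat{\pi}$ and $\widehat{\tau}$. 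The main observation is that, given c.c.p. maps $\varphi_j : C(G) \to A$ coming from $d$-decomposability, the compositions
\[
\psi_j := \widetilde{\pi} \circ \varphi_j : C(G) \longrightarrow \mathcal{T}_\mathcal{H}
\]
are c.c.p. (since $\widetilde{\pi}$ is a $\ast$-homomorphism), and are the natural candidates for a Rokhlin system.

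Next, I would fix a finite set $F \subset S$, a finite set $K \subset C(G)$, and $\epsilon > 0$. Without loss of generality, enlarge $K$ to be closed under complex conjugation, and split $F$ as $\widetilde{\pi}(F_1) \sqcup \widetilde{\tau}(F_2) \sqcup \widetilde{\tau}(F_2)^{\ast}$ for finite $F_1 \subset A_0$ and $F_2 \subset \mathcal{H}_0$. Apply \cref{definition: rokhlin_correspondence_compact} with this $F_1, F_2, K$ and $\epsilon$ to obtain $\varphi_0, \ldots, \varphi_d : C(G) \to A$, and set $\psi_j := \widetilde{\pi} \circ \varphi_j$.

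The verification of the four clauses of \cref{lem: rokhlin_local} then reduces to the algebraic identities recalled just before the statement of the proposition:
\[
\widetilde{\pi}(a)\widetilde{\tau}(\xi) = \widetilde{\tau}(\pi_\mathcal{H}(a)\xi), \qquad \widetilde{\tau}(\xi)\widetilde{\pi}(a) = \widetilde{\tau}(\xi \cdot a),
\]
together with the fact that $\widetilde{\tau}$ is contractive and $\widetilde{\pi}$ is a $\ast$-homomorphism. Indeed, orthogonality (Lemma clause (i)) is immediate since $\widetilde{\pi}$ is multiplicative, giving $\|\psi_j(f_1)\psi_j(f_2)\| \leq \|\varphi_j(f_1)\varphi_j(f_2)\| < \epsilon$; equivariance (clause (iii)) follows from $\mu_s \circ \widetilde{\pi} = \widetilde{\pi}\circ \alpha_s$ in \cref{lem: action_pimsner}. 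The commutation estimate (clause (ii)) is the crucial one: with $a = \widetilde{\pi}(a')$ it reduces to condition (iv) of $d$-decomposability, while with $a = \widetilde{\tau}(\xi)$ one computes
\[
[\psi_j(f), \widetilde{\tau}(\xi)] = \widetilde{\tau}\bigl(\pi_\mathcal{H}(\varphi_j(f))\xi - \xi \cdot \varphi_j(f)\bigr),
\]
whose norm is controlled by condition (v) of $d$-decomposability. The case $a = \widetilde{\tau}(\xi)^{\ast}$ follows from the previous one by taking adjoints and replacing $f$ with $\bar{f} \in K$, using that c.c.p. maps preserve adjoints. The approximate-unit condition (clause (iv)) is handled analogously: on $\widetilde{\pi}(a')$ it comes from condition (ii) of $d$-decomposability, and on $\widetilde{\tau}(\xi)$ and $\widetilde{\tau}(\xi)^{\ast}$ it comes from the two estimates in condition (iii), via $\sum_j \psi_j(1) \widetilde{\tau}(\xi) = \widetilde{\tau}(\pi_\mathcal{H}(\sum_j \varphi_j(1))\xi)$ and $\widetilde{\tau}(\xi) \sum_j \psi_j(1) = \widetilde{\tau}(\xi \cdot \sum_j \varphi_j(1))$ (for the adjoint, take $\ast$).

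Finally, the Cuntz--Pimsner case is formally identical: replace $\widetilde{\pi}, \widetilde{\tau}$ with the universal covariant representation $\widehat{\pi}, \widehat{\tau}$ and set $\psi_j := \widehat{\pi} \circ \varphi_j$; all of the identities above carry over verbatim since they are properties of representations of $(\mathcal{H}, A, \pi_\mathcal{H})$, and covariance plays no role in the estimates. I do not foresee a conceptual obstacle; the only mild subtlety is the bookkeeping for $\widetilde{\tau}(\xi)^{\ast}$, which is resolved by closing $K$ under conjugation at the outset.
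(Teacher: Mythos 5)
Your proposal is correct and follows essentially the same route as the paper: compose the maps from $d$-decomposability with the (covariant) universal representation, and verify the local Rokhlin conditions of \cref{lem: rokhlin_local} on the countable self-adjoint generating set $\widetilde{\pi}(A_0)\cup\widetilde{\tau}(\mathcal{H}_0)\cup\widetilde{\tau}(\mathcal{H}_0)^{\ast}$, handling the adjoint generators by closing $K$ under conjugation. The only cosmetic difference is that you carry out the Toeplitz--Pimsner case explicitly and cite the Cuntz--Pimsner case as identical, whereas the paper does the reverse.
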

\begin{proof}
We prove that $\dim_{Rok}(\beta) \leq d$ since the other case is similar. Since $A$ is separable and $\mathcal{H}$ is countably generated, $\mathcal{O}_\mathcal{H}$ is separable, so it suffices to verify the conditions of \cref{lem: rokhlin_local}. So, we choose countable sets $T_1 \subset A$ and $T_2\subset \mathcal{H}$ which each generate $A$ and $\mathcal{H}$ respectively with $T_1 = T_1^{\ast}$. Set
\[
S_1 := \{\widehat{\pi}(a) : a\in T_1\}, \text{ and } S_2 := \{\widehat{\tau}(\xi), \widehat{\tau}(\xi)^{\ast} : \xi \in T_2\}.
\]
Then, if $S := S_1\cup S_2$, then $S$ is a countable set which generates $\mathcal{O}_\mathcal{H}$ satisfying $S = S^{\ast}$. Therefore, we are in a position to apply \cref{lem: rokhlin_local}. So fix finite sets $F \subset S, K\subset C(G),$ and $\epsilon > 0$, and assume without loss of generality that $K=K^{\ast}$. Set
\begin{equation*}
\begin{split}
F_1 &:= \{a \in T_1 : \widehat{\pi}(a) \in F\}, \text{ and } \\
F_2 &:= \{\xi \in T_2 : \text{ either } \widehat{\tau}(\xi) \in F, \text{ or } \widehat{\tau}(\xi)^{\ast} \in F\}
\end{split}
\end{equation*}
By hypothesis, there are $(d+1)$ c.c.p. maps
\[
\varphi_0, \varphi_1, \ldots, \varphi_d : C(G)\to A
\]
satisfying the conditions of \cref{definition: rokhlin_correspondence_compact} for the tuple $(F_1,F_2,K,\epsilon)$. Define $\psi_j : C(G)\to \mathcal{O}_\mathcal{H}$ by
\[
\psi_j := \widehat{\pi}\circ \varphi_j.
\]
Then each $\psi_j : C(G)\to \mathcal{O}_\mathcal{H}$ is a c.c.p. map.
\begin{enumerate}[label=(\roman*)]
\item Clearly, if $f_1,f_2\in K$ are orthogonal, then $\|\psi_j(f_1)\psi_j(f_2)\| < \epsilon$.
\item If $x\in F$, we wish to prove that $\|(\sum_{j=0}^d \psi_j(1_{C(G)})x - x\| < \epsilon$. We consider the following cases separately:
\begin{itemize}
\item If $x = \widehat{\pi}(a)$ for some $a\in F_1$, then
\[
\|\sum_{j=0}^d \psi_j(1_{C(G)})x - x\| \leq \|\sum_{j=0}^d \varphi_j(1_{C(G)})a - a\| < \epsilon.
\]
\item If $x = \widehat{\tau}(\xi)$ for some $\xi \in F_2$. Then
\begin{equation*}
\begin{split}
\|\sum_{j=0}^d \psi_j(1_{C(G)})x - x\| &= \|\sum_{j=0}^d \widehat{\pi}(\varphi_j(1_{C(G)}))\widehat{\tau}(\xi) - \widehat{\tau}(\xi)\| \\
&= \left\|\widehat{\tau}\left(\sum_{j=0}^d \pi_\mathcal{H}(\varphi_j(1_{C(G)}))\xi - \xi\right)\right\| \\
&< \epsilon.
\end{split}
\end{equation*}
\item If $x = \widehat{\tau}(\xi)^{\ast}$ for some $\xi \in F_2$, then the argument is similar after taking adjoints.
\end{itemize}
\item If $x\in F$, and $f\in K$, then we wish to prove that $\|[\psi_j(f), x]\| < \epsilon$. Once again, we consider the following cases separately:
\begin{itemize}
\item If $x = \widehat{\pi}(a)$ for some $a\in F_1$, then
\begin{equation*}
\begin{split}
\|[\psi_j(f), x]\| &= \|\widehat{\pi}([\varphi_j(f),a])\| \leq \|[\varphi_j(f),a]\| < \epsilon.
\end{split}
\end{equation*}
\item If $x = \widehat{\tau}(\xi)$ for some $\xi \in F_2$, then
\begin{equation*}
\begin{split}
\|[\psi_j(f),x]\| &= \|\widehat{\pi}(\varphi_j(f))\widehat{\tau}(\xi) - \widehat{\tau}(\xi)\widehat{\pi}(\varphi_j(f))\| \\
&= \|\widehat{\tau}\left( \pi_\mathcal{H}(\varphi_j(f))\xi - \xi\cdot \varphi_j(f)\right)\| \\
&\leq \|\pi_\mathcal{H}(\varphi_j(f))\xi - \xi\cdot \varphi_j(f)\| < \epsilon.
\end{split}
\end{equation*}
\item If $x = \widehat{\tau}(\xi)^{\ast}$ for some $\xi \in F_2$, then the argument is similar after taking adjoints. Here, one uses the fact that $K = K^{\ast}$ and that positive linear maps preserve adjoints.
\end{itemize}
\item We wish to prove that $\|\beta_h(\psi_j(f)) - \psi_j(\sigma_h(f))\| < \epsilon$ for all $f\in K$ and $h\in G$. However, this follows from condition (vi) of \cref{definition: rokhlin_correspondence_compact} and the fact that
\[
\beta_h\circ \psi_j = \beta_h\circ \widehat{\pi}\circ \varphi_j = \widehat{\pi}\circ \alpha_h\circ \varphi_j
\]
by \cref{lem: action_pimsner}.
\end{enumerate}
Thus, we have verified the conditions of \cref{lem: rokhlin_local}, completing the proof.
\end{proof}

\section{Equivariant Bundles}\label{sec: bundles}

We are finally in a position to prove our main result. Once again, we begin with some definitions (see \cite[Chapter 1]{atiyah} for further details). \\

Throughout this section, we fix a compact metric space $X$. By a \emph{vector bundle} over $X$, we mean a triple $(E,p,X)$ where $p:E\to X$ is a locally trivial, complex vector bundle, endowed with a Hermitian metric, whose fibers (denoted by $\{E_x : x\in X\}$) are finite dimensional Hilbert spaces. Then, the space $\Gamma(E)$ of all continuous sections on $E$ is a finitely generated Hilbert $C(X)$-module, and carries a natural (central) action of $C(X)$. We write $(\Gamma(E), C(X), \pi_E)$ for this C*-correspondence, and $\mathcal{O}_E$ for the associated Cuntz-Pimsner algebra. \\

Let $G$ be a topological group. An action $(\widetilde{\alpha}, \widetilde{\gamma})$ of $G$ on $(E,p,X)$ is a pair of continuous actions $\widetilde{\alpha}: G\to \text{Homeo}(X)$ and $\widetilde{\gamma} : G\to \text{Homeo}(E)$ such that $p:E\to X$ is $G$-equivariant, and, for each $s\in G$ and $x\in X$, the map $E_x \to E_{\widetilde{\alpha}_s(x)}$ is a linear map of vector spaces that preserves the inner product. Given such a group action, there is a natural action $(\alpha, \gamma)$ of $G$ on the C*-correspondence $(\Gamma(E), C(X), \pi_E)$ given by
\begin{equation}\label{eqn: induced_action_correspondence}
\alpha_s(f)(x) := f(\widetilde{\alpha}_{s^{-1}}(x)), \text{ and } \gamma_s(\xi)(x) := \widetilde{\gamma}_s(\xi(\widetilde{\alpha}_{s^{-1}}(x))).
\end{equation}
Therefore, we get an induced action $\beta : G\to \text{Aut}(\mathcal{O}_E)$ on the associated Cuntz-Pimsner algebra by \cref{lem: action_pimsner}. \\

The following lemma is a partial generalization of \cite[Theorem 4.1]{gardella_compact}. There, the author assumes that $G$ is a compact Lie group (as the proof relies on the existence of local cross-sections). Here, we drop that assumption, but we need $X$ to be metrizable.

\begin{lemma}\label{lem: rokhlin_commutative}
Let $X$ be a compact metric space, and $\widetilde{\alpha}:G\to \text{Homeo}(X)$ be an action of a compact, second countable group on $X$. Write $\alpha : G\to \text{Aut}(C(X))$ for the induced action on $C(X)$. If $\widetilde{\alpha}$ is free, then
\[
\dim_{Rok}(\alpha) \leq \dim(X/G).
\]
\end{lemma}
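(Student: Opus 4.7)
The plan is to verify the local Rokhlin criterion \cref{lem: rokhlin_local} by building $(n+1)$ Rokhlin maps directly from local trivializations of the orbit map $q:X\to Y:=X/G$, where $n:=\dim(Y)$. The key geometric input I will need is that a free action of a compact second countable group on a compact metric space makes $q$ a locally trivial principal $G$-bundle, so that $Y$ can be covered by finitely many opens $V$ admitting continuous sections $s_V:V\to X$ with corresponding equivariant trivializations
\[
\phi_V:q^{-1}(V)\xrightarrow{\;\cong\;}V\times G,\qquad \phi_V^{-1}(u,g):=\widetilde\alpha_g(s_V(u)).
\]
This is a Gleason--Mostert-type statement for compact (not necessarily Lie) groups, and is precisely the point where the metric hypothesis on $X$ enters; it takes the place of the local cross-section argument for compact Lie groups used in \cite[Theorem 4.1]{gardella_compact}.

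Granted this, fix finite sets $F\subset C(X)$, $K\subset C(G)$ and $\epsilon>0$. I would refine the cover $\{V\}$ of $Y$ to a strongly $n$-decomposable one $\mathcal U=\mathcal U_0\sqcup\cdots\sqcup\mathcal U_n$ via \cref{lem: strongly_decomposable_cover}, and pick a subordinate partition of unity $\{\rho_V:V\in\mathcal U\}$. Writing $g_V(x)$ for the $G$-coordinate of $\phi_V(x)$, define
\[
\psi_V:C(G)\to C(X),\qquad \psi_V(f)(x):=\rho_V(q(x))\,f(g_V(x))
\]
(extended by zero outside $q^{-1}(V)$), and set $\psi_i:=\sum_{V\in\mathcal U_i}\psi_V$ for $0\le i\le n$. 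Each $\psi_V$ is c.c.p.\ (the positive multiplier $\rho_V\circ q$ times the $\ast$-homomorphism $f\mapsto f\circ g_V$) and preserves orthogonality pointwise, hence has order zero. Because distinct members of $\mathcal U_i$ have disjoint closures, the supports of the functions $\{\psi_V(f) : V\in\mathcal U_i\}$ are pairwise disjoint, so each $\psi_i$ is itself c.c.p.\ and order zero.

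The remaining conditions of \cref{lem: rokhlin_local} then hold \emph{exactly} rather than just approximately: equivariance
\[
\alpha_s(\psi_V(f))(x)=\rho_V(q(x))\,f(s^{-1}g_V(x))=\psi_V(\sigma_s(f))(x)
\]
follows from the fact that $\phi_V$ intertwines $\widetilde\alpha$ with left translation on the second factor of $V\times G$; commutation with $F$ is automatic since $C(X)$ is commutative; and unitality holds as $\sum_{i=0}^n\psi_i(1_{C(G)})=\sum_{V\in\mathcal U}\rho_V\circ q=1$. Thus the family is an $(n,F,K,0)$-Rokhlin system for every choice of $F,K,\epsilon$, so $\dim_{Rok}(\alpha)\le n$. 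The principal obstacle, and the place where metrizability of $X$ is genuinely used, is establishing the local-triviality of $q$ for a general compact second countable group; in the metric setting this can be obtained from the standard principal-bundle theorem for free actions of compact groups (in the spirit of Bredon's compact transformation groups), or, failing that, by writing $G$ as an inverse limit of compact Lie quotients and lifting local sections from a sufficiently fine Lie quotient that already acts freely on $X$.
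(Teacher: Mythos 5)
There is a genuine gap, and it sits exactly at the step you yourself flag as "the principal obstacle": the claim that a free action of a general compact second countable group on a compact metric space makes $q:X\to X/G$ a locally trivial principal $G$-bundle. This is a theorem for compact \emph{Lie} groups (Gleason/Bredon), but it is false for general compact groups. For instance, let $G=\prod_{n=1}^{\infty}\mathbb{T}$ act on $X=\prod_{n=1}^{\infty}SU(2)$ coordinatewise via a maximal torus in each factor; this is a free action of a compact, second countable group on a compact metric space whose orbit map is the infinite product of Hopf fibrations $\prod S^3\to\prod S^2$, and it admits no local cross-sections (any basic open set in $\prod S^2$ contains full $S^2$-factors, over which the Hopf fibration is nontrivial). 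In that example $\dim(X/G)=\infty$, so the inequality is vacuous there, but it shows your "Gleason--Mostert-type statement" cannot be invoked as a known fact; metrizability of $X$ does not rescue it. Neither of your proposed fixes closes the gap: Bredon's principal-bundle theorem assumes $G$ is Lie, and in the inverse-limit approach the Lie quotients $G/N_i$ act freely only on $X/N_i$, not on $X$, so you are left having to trivialize $X\to X/N_i$ for the non-Lie kernels $N_i$ --- the same problem you started with --- and the trivializing neighbourhoods have no reason to stabilize as $i\to\infty$. Modulo this, your construction is essentially Gardella's proof of \cite[Theorem 4.1]{gardella_compact}, i.e.\ precisely the argument whose Lie hypothesis this lemma is meant to remove.

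The paper avoids topological cross-sections entirely. It views $C(X)$ as a $C(Y)$-algebra over $Y=X/G$, identifies each fiber equivariantly with $(C(G),\sigma)$ (this needs only freeness and compactness: $g\mapsto\widetilde\alpha_g(x)$ is a continuous bijection from a compact space onto the orbit, hence a homeomorphism), notes that $\sigma$ has Rokhlin dimension zero, and then applies \cref{thm: rokhlin_cx_algebra}. The crucial difference is that \cref{thm: rokhlin_cx_algebra} replaces multiplicative local sections of the bundle by equivariant c.c.p.\ sections $A(\overline{V})\to A$ obtained from the Choi--Effros lifting theorem plus averaging over $G$ (\cref{lem: equivariant_section}), together with upper semicontinuity of the fiberwise norms to make all identities hold only up to $\epsilon$. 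If you want to salvage your direct approach, you would either have to restrict to compact Lie groups, or prove a local cross-section theorem under the hypothesis $\dim(X/G)<\infty$ --- a nontrivial result you would need to state and justify, not assume.
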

\begin{proof}
If $\widetilde{\alpha}$ is free, then $Y := X/G$ is compact and Hausdorff. Furthermore, if $\pi : X\to Y$ denotes the quotient map, then $\pi^{\ast} : C(Y) \to C(X)$ gives $A := C(X)$ the structure of a $C(Y)$-algebra. Since $X$ is metrizable, $A$ is separable, so we may apply \autoref{thm: rokhlin_cx_algebra}. \\

To that end, observe that the action $\alpha : G\to \text{Aut}(A)$ is by $C(Y)$-linear automorphisms. Furthermore, for a point $y = \pi(x)\in Y$, the fiber of $A$ at $y$ is $C(G\cdot x)$. But the map $\Theta: G\to G\cdot x$ given by $g \mapsto \widetilde{\alpha}_g(x)$ induces a $G$-equivariant isomorphism $\Theta^{\ast}: (C(G\cdot x), \alpha_y) \to (C(G), \sigma)$. Since $\sigma$ has the Rokhlin property, it follows that
\[
\dim_{Rok}(\alpha_y) = 0.
\]
for all $y\in Y$. By \autoref{thm: rokhlin_cx_algebra}, we conclude that $\dim_{Rok}(\alpha) \leq \dim(Y)$. 
\end{proof}

In the statement of this lemma, if we further assume that $X$ is finite dimensional and that $G$ is a compact Lie group, then 
\[
\dim(X/G)\leq \dim(X) < \infty
\]
by \cite[Corollary 1.7.32]{palais}. Therefore, $\dim_{Rok}(\alpha) < \infty$. Of course, it may very well happen that $X/G$ is finite dimensional even if $G$ is not a Lie group. \\

We now prove the first part of \cref{mainthm: action_bundle}.

\begin{theorem}\label{thm: rokhlin_free}
Let $G$ be a compact, second countable group and $X$ a compact metric space. Let $(\widetilde{\alpha},\widetilde{\gamma})$ be an action of $G$ on a vector bundle $(E,p,X)$, and let $\beta : G\to \text{Aut}(\mathcal{O}_E)$ denote the induced action on the associate Cuntz-Pimsner algebra. If the action $\widetilde{\alpha}$ of $G$ on $X$ is free, then
\[
\dim_{Rok}(\beta) \leq \dim(X/G).
\]
\end{theorem}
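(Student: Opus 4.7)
The plan is to combine \cref{lem: rokhlin_commutative} with \cref{prop: rokhlin_pimsner}. The former gives $\dim_{Rok}(\alpha) \leq \dim(X/G)$ for the induced action $\alpha : G \to \text{Aut}(C(X))$; the latter passes finite Rokhlin dimension from the C*-correspondence $(\Gamma(E), C(X), \pi_E)$ to $\mathcal{O}_E$, provided the induced action $(\alpha,\gamma)$ on this correspondence is $d$-decomposable in the sense of \cref{definition: rokhlin_correspondence_compact}, where $d := \dim(X/G)$. The task therefore reduces to verifying $d$-decomposability for $(\alpha, \gamma)$.

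To verify $d$-decomposability, I would fix finite sets $F_1 \subset C(X)$, $F_2 \subset \Gamma(E)$, $K \subset C(G)$, and $\epsilon > 0$, and set $M := 1 + \max_{\xi \in F_2} \|\xi\|$. The local characterization \cref{lem: rokhlin_local}, applied to $\alpha$, produces $(d+1)$ c.c.p.\ maps $\varphi_0, \ldots, \varphi_d : C(G) \to C(X)$ forming a Rokhlin system for $\alpha$ with respect to the enlarged finite set $F_1 \cup \{1_{C(X)}\}$, the set $K$, and tolerance $\epsilon/M$. Packing $1_{C(X)}$ into the finite set is the crucial move: condition (iv) of \cref{lem: rokhlin_local} then supplies the scalar-level estimate
\[
\left\|\sum_{j=0}^d \varphi_j(1_{C(G)}) - 1_{C(X)}\right\|_\infty < \epsilon/M,
\]
which will promote to a module-level estimate on $\Gamma(E)$.

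Next, I would verify the six conditions of \cref{definition: rokhlin_correspondence_compact}. Conditions (i), (ii), and (vi) transfer verbatim from the Rokhlin system for $\alpha$. Conditions (iv) and (v) are automatic: $C(X)$ is commutative, so $[\varphi_j(f), a] = 0$; and the action of $C(X)$ on $\Gamma(E)$ is central, so $\pi_E(\varphi_j(f))\xi = \varphi_j(f) \cdot \xi = \xi \cdot \varphi_j(f)$. For condition (iii), the standard Hilbert module inequality $\|a\cdot\xi\| \leq \|a\|\|\xi\|$ gives
\[
\left\|\pi_E\!\left(\sum_{j=0}^d \varphi_j(1_{C(G)})\right)\xi - \xi\right\| = \left\|\left(\sum_{j=0}^d \varphi_j(1_{C(G)}) - 1_{C(X)}\right)\!\cdot\xi\right\| < \frac{\epsilon}{M}\|\xi\| \leq \epsilon
\]
for each $\xi \in F_2$; the right-module estimate is identical by centrality. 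An application of \cref{prop: rokhlin_pimsner} then yields $\dim_{Rok}(\beta) \leq d = \dim(X/G)$.

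The substantive content has already been absorbed into \cref{lem: rokhlin_commutative} and \cref{prop: rokhlin_pimsner}, so the present step is essentially bookkeeping; there is no serious obstacle. The conceptually relevant observation is that commutativity of $C(X)$ and centrality of its action on $\Gamma(E)$ collapse the commutator and intertwining conditions of $d$-decomposability to tautologies, reducing the verification on the Hilbert module to the scalar-level Rokhlin estimate that \cref{lem: rokhlin_commutative} already provides.
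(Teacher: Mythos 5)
Your proposal is correct and follows essentially the same route as the paper: reduce to $d$-decomposability of $(\alpha,\gamma)$, obtain the scalar Rokhlin system for $\alpha$ from \cref{lem: rokhlin_commutative} with $1_{C(X)}$ adjoined to the finite set, and use commutativity of $C(X)$ and centrality of its action on $\Gamma(E)$ to dispose of the commutator and intertwining conditions before invoking \cref{prop: rokhlin_pimsner}. The only (immaterial) difference is that you rescale the tolerance by $M$ where the paper instead normalizes $\|\xi\|\leq 1$ for $\xi\in F_2$.
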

\begin{proof}
We write $(\alpha,\gamma)$ for the action of $G$ on $(\Gamma(E), C(X),\pi_E)$ induced by $(\widetilde{\alpha}, \widetilde{\gamma})$ as above. Assume without loss of generality that $d := \dim(X/G) < \infty$. Then, by \cref{lem: rokhlin_commutative}, $\dim_{Rok}(\alpha) \leq d$. We claim that the pair $(\alpha,\gamma)$ is $d$-decomposable in the sense of \cref{definition: rokhlin_correspondence_compact}. To see this, fix finite sets $F_1 \subset C(X), F_2\subset \Gamma(E), K\subset C(G)$ and $\epsilon > 0$, and assume without loss of generality $1_{C(X)} \in F_1$ and that $\|\xi\|\leq 1$ for all $\xi \in F_2$. Since $\dim_{Rok}(\alpha) \leq d$, there exist $(d+1)$ c.c.p. maps
\[
\varphi_0, \varphi_1, \ldots, \varphi_d : C(G)\to C(X)
\]
forming a $(F_1,K,\epsilon)$-Rokhlin system as in \cref{lem: rokhlin_local}. Then, it is  clear that these maps satisfy conditions (i), (ii), (iv) and (vi) of \cref{definition: rokhlin_correspondence_compact}. Furthermore, it satisfies condition (v) because the action of $C(X)$ on $\Gamma(E)$ is central (in the sense that $\pi_E(f)\xi = \xi\cdot f$ for all $f\in C(X)$ and $\xi \in \Gamma(E)$). To verify condition (iii), set $x:= \sum_{j=0}^d \varphi_j(1_{C(G)})$, then, for any $\xi \in F_2$, we have
\[
\|\pi_E(x)\xi - \xi\| = \|\pi_E(x)\xi - \pi_E(1_{C(X)})\xi\| \leq \|x - 1_{C(X)}\| < \epsilon.
\]
The second part of condition (iii) once again holds because the action is central. So, by \cref{prop: rokhlin_pimsner}, we conclude that
\[
\dim_{Rok}(\beta) \leq d
\]
as required.
\end{proof}

Given a vector bundle $(E,p,X)$ as above, let  $(\widehat{\pi}, \widehat{\tau})$ denote the universal covariant representation of $(\Gamma(E), C(X), \pi_E)$ on $\mathcal{O}_E$. Since the action of $C(X)$ on $\Gamma(E)$ is central, for any $f\in C(X)$ and $\xi \in E$, one has
\[
\widehat{\pi}(f)\widehat{\tau}(\xi) = \widehat{\tau}(\pi_E(f)\xi) = \widehat{\tau}(\xi\cdot f) = \widehat{\tau}(\xi)\widehat{\pi}(f).
\]
Since $\mathcal{O}_E$ is generated by $\widehat{\pi}(C(X))\cup \widehat{\tau}(\Gamma(E))$, it follows that the map $\widehat{\pi} : C(X)\to \mathcal{O}_E$ gives $\mathcal{O}_E$ the structure of a $C(X)$-algebra.\\

Recall that a $C(X)$-algebra $A$ is said to be continuous if the maps $\{\Gamma_a : a\in A\}$ are all continuous. The next theorem (due to Vasselli) states that $\mathcal{O}_E$ is, in fact, a continuous $C(X)$-algebra, and also describes its fibers. Note that, for our purposes, all vector bundles have fibers that are finite dimensional Hilbert spaces. Recall that, for $n \in \N\cup\{+\infty\}$, the Cuntz algebra may be defined as $\mathcal{O}_n := C^{\ast}(s_1,s_2,\ldots, s_n)$ where $s_k \in \mathcal{B}(\ell^2)$ are isometries satisfying $\sum_{i=1}^n s_is_i^{\ast} = 1$.

\begin{theorem}\cite[Proposition 2]{vasselli}\label{thm: vasselli}
$\mathcal{O}_E$ is a continuous $C(X)$-algebra. Furthermore, if $n:X\to \Z$ denotes the rank function of $E$, then the fiber of $\mathcal{O}_E$ at a point $x\in X$ is $\mathcal{O}_{n(x)}$.
\end{theorem}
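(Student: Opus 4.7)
The plan is to exploit local triviality of $E$ to reduce both assertions to a trivial local model, where the Cuntz-Pimsner algebra is a simple tensor product and the fiber computation is transparent. The two key inputs are a general quotient principle for Cuntz-Pimsner algebras and the fact that continuity of each function $\Gamma_a$ is a local property on $X$.

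First I would establish the quotient principle: for any closed $Y\subset X$, the restriction map $\Gamma(E)\to \Gamma(E|_Y)$ is surjective (by Tietze extension applied inside local trivializations and glued using a partition of unity) with kernel $\Gamma(E)\cdot C(X,Y)$. This yields an exact sequence of correspondences over $C(Y) = C(X)/C(X,Y)$, and the universal property of the Cuntz-Pimsner construction produces a surjection $\mathcal{O}_E \twoheadrightarrow \mathcal{O}_{E|_Y}$ whose kernel is the closed ideal generated by $\widehat{\pi}(C(X,Y))$. By Cohen factorization this ideal equals $C(X,Y)\cdot \mathcal{O}_E$, so $\mathcal{O}_E(Y)\cong \mathcal{O}_{E|_Y}$.

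Next I would apply this principle in two cases. For the fiber at $x_0\in X$, take $Y=\{x_0\}$: then $E|_{\{x_0\}} = E_{x_0}$ is a finite-dimensional Hilbert space over $\C$, and a direct application of the universal property shows that the Cuntz-Pimsner algebra of $(E_{x_0}, \C, \mathrm{id})$ is the Cuntz algebra $\mathcal{O}_{n(x_0)}$ (the generators $\widehat{\tau}(e_i)$ satisfy $\widehat{\tau}(e_i)^{\ast}\widehat{\tau}(e_j)=\delta_{ij}$, while covariance forces $\sum_i \widehat{\tau}(e_i)\widehat{\tau}(e_i)^{\ast}=1$). For the continuity assertion, fix $x_0$ and choose a closed neighborhood $\overline{V}$ on which $E$ trivializes as $\overline{V}\times \C^n$ with $n=n(x_0)$. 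Then $\Gamma(E|_{\overline{V}})\cong C(\overline{V})^n$ as a correspondence over $C(\overline{V})$, and its Cuntz-Pimsner algebra is canonically $C(\overline{V})\otimes \mathcal{O}_n$. By the first step this is $\mathcal{O}_E(\overline{V})$, so $\mathcal{O}_E$ is locally a trivial $C^{\ast}$-algebra bundle. Each tensor product $C(\overline{V})\otimes \mathcal{O}_n$ is a continuous $C(\overline{V})$-algebra, so every $\Gamma_a$ is continuous on $\overline{V}$, and hence at $x_0$.

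The main obstacle will be verifying the quotient principle rigorously, and in particular confirming that Katsura's covariance ideal $J_\mathcal{H}$ behaves well under passage to $Y$. In our setting this works out cleanly: the left action of $C(X)$ on $\Gamma(E)$ is injective with image inside $\mathcal{K}(\Gamma(E))$ because $\Gamma(E)$ is finitely generated projective, so $J_{\Gamma(E)} = C(X)$, and the analogous statement holds after quotienting to $C(Y)$, ensuring the two covariance conditions are compatible under the induced map. Once this is in place, the remaining work is essentially bookkeeping through local trivializations.
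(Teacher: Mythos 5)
The paper offers no proof of this statement---it is quoted directly from Vasselli \cite{vasselli}---so there is no internal argument to compare against. Your outline is essentially the standard modern route to this result and is correct in substance: a quotient principle identifying $\mathcal{O}_E(Y)$ with $\mathcal{O}_{E|_Y}$, the computation $\mathcal{O}_{\C^n}\cong\mathcal{O}_n$ from the universal relations, and continuity from local triviality. Two steps deserve more care than you give them. First, in the quotient principle the existence of a surjection $\mathcal{O}_E\twoheadrightarrow\mathcal{O}_{E|_Y}$ killing $\widehat{\pi}(C(X,Y))$ is the easy half; the real content is that its kernel is \emph{exactly} $C(X,Y)\mathcal{O}_E$ and no larger. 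For that you must either verify that the quotient $\mathcal{O}_E/C(X,Y)\mathcal{O}_E$ carries a \emph{covariant} representation of $(\Gamma(E|_Y),C(Y))$, so that universality produces the inverse map, or invoke Katsura's gauge-invariant uniqueness theorem for the induced map $\mathcal{O}_{E|_Y}\to\mathcal{O}_E/C(X,Y)\mathcal{O}_E$. The covariance check does go through, because the induced map $\mathcal{K}(\Gamma(E))\to\mathcal{K}(\Gamma(E|_Y))$ carries $\pi_E(f)$ to $\pi_{E|_Y}(f|_Y)$ and the covariance ideal is the full coefficient algebra on both sides; note, though, that injectivity of the left action follows from every fiber of $E$ being nonzero, not merely from $\Gamma(E)$ being finitely generated projective. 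Second, the identification $\mathcal{O}_{C(\overline{V})\otimes\C^n}\cong C(\overline{V})\otimes\mathcal{O}_n$ is likewise not purely formal: universality gives a surjection onto the subalgebra generated by $C(\overline{V})\otimes 1$ and $1\otimes s_i$, and injectivity again requires gauge-invariant uniqueness. With those two points made explicit the argument is complete, and it recovers Vasselli's statement by a route that is arguably cleaner than constructing the continuous field by hand.
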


In order to apply \cref{thm: rokhlin_cx_algebra} in this setting, we need one more fact concerning certain actions of finite groups on Cuntz algebras. Recall that an automorphism $\alpha \in \text{Aut}(\mathcal{O}_n)$ is said to be \emph{outer} if there does not exist any unitary $u \in \mathcal{U}(\mathcal{O}_n)$ such that $\alpha(a) = uau^{\ast}$ for all $a\in \mathcal{O}_n$. Let $u = (u_{i,j}) \in \mathcal{U}_n(\C)$ be any non-trivial unitary matrix, and $\alpha_u \in \text{Aut}(\mathcal{O}_n)$ be the automorphism defined by
\[
\alpha_u(s_j) = \sum_{i=1}^n u_{i,j}s_i.
\]
Then, it is a result of Enomoto, Takehana and Watatani \cite{enomoto} that any such automorphism $\alpha_u$ is outer (see \cite{matsumoto} for an alternate proof). \\

Now, let $\rho : G\to \mathcal{U}_n(\C)$ be a faithful unitary representation of a compact group $G$, and let $\beta : G\to \text{Aut}(\mathcal{O}_n)$ be the induced action of $G$ on $\mathcal{O}_n$ given by \cref{lem: action_pimsner} (such actions are said to be \emph{quasi-free}). Then $\beta$ is pointwise outer. If $G$ is finite, then a result of Gardella \cite[Theorem 4.19]{gardella_compact} implies that
\[
\dim_{Rok}(\beta) \leq 1.
\]

This allows us to complete the proof of \cref{mainthm: action_bundle}. 

\begin{theorem}\label{thm: trivial_action_bundle}
Let $G$ be a finite group and $X$ a compact metric space. Let $(\widetilde{\alpha},\widetilde{\gamma})$ be an action of $G$ on a vector bundle $(E,p,X)$, and let $\beta : G\to \text{Aut}(\mathcal{O}_E)$ denote the induced action on the associated Cuntz-Pimsner algebra. If the action $\widetilde{\alpha}$ of $G$ on $X$ is trivial, and the representation of $G$ induced by $\widetilde{\gamma}$ on each fiber of $E$ is faithful, then
\[
\dim_{Rok}(\beta) \leq 2\dim(X)+1.
\]
\end{theorem}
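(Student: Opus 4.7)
The plan is to apply Theorem \ref{thm: rokhlin_cx_algebra} to $\mathcal{O}_E$, viewed as a $C(X)$-algebra via the map $\widehat{\pi}:C(X)\to \mathcal{O}_E$ discussed just before Theorem \ref{thm: vasselli}. The hypothesis that $\widetilde{\alpha}$ is trivial is what lets us do this: from equation \eqref{eqn: induced_action_correspondence}, the induced action on $C(X)$ is trivial, so $\beta_s\circ \widehat{\pi} = \widehat{\pi}\circ \alpha_s = \widehat{\pi}$ for every $s\in G$, meaning $\beta$ acts by $C(X)$-linear automorphisms.

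Next I would identify the fibers and the fiber actions. By Theorem \ref{thm: vasselli}, $\mathcal{O}_E$ is a continuous $C(X)$-algebra with fiber $\mathcal{O}_{n(x)}$ at each $x \in X$, where $n(x) = \dim(E_x)$. Under this identification, the fiber action $\beta_x : G\to \text{Aut}(\mathcal{O}_{n(x)})$ is precisely the quasi-free action induced by the unitary representation $\rho_x : G\to \mathcal{U}(E_x)$ coming from $\widetilde{\gamma}$: an orthonormal basis of $E_x$ yields generating isometries $s_1,\dots, s_{n(x)}$ of $\mathcal{O}_{n(x)}$ whose $G$-action is given by the matrix of $\rho_x$ via Lemma \ref{lem: action_pimsner}. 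Since $\rho_x$ is faithful by hypothesis and $G$ is finite, Gardella's result cited immediately before the theorem gives
\[
\dim_{Rok}(\beta_x) \leq 1 \qquad \text{for every } x \in X.
\]

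The remaining ingredient is to check that $\mathcal{O}_E$ is separable and nuclear, so that Theorem \ref{thm: rokhlin_cx_algebra} applies. Separability is immediate because $C(X)$ is separable ($X$ being a compact metric space) and $\Gamma(E)$ is finitely generated as a Hilbert $C(X)$-module, so $\mathcal{O}_E$ is generated by a countable set. Nuclearity follows since $\mathcal{O}_E$ is a continuous $C(X)$-algebra over a compact metric (hence finite or $\sigma$-finite dimensional) space whose fibers $\mathcal{O}_{n(x)}$ are all nuclear; alternatively one can cite the fact that Cuntz-Pimsner algebras of vector bundles are nuclear. Combining these facts with Theorem \ref{thm: rokhlin_cx_algebra} yields
\[
\dim_{Rok}(\beta) \leq (\dim(X)+1)\bigl[\,\sup_{x\in X}\dim_{Rok}(\beta_x) + 1\bigr] - 1 \leq (\dim(X)+1)\cdot 2 - 1 = 2\dim(X)+1,
\]
as desired.

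The steps are essentially bookkeeping; the only point that requires genuine care is verifying that the fiber action $\beta_x$ is indeed the quasi-free action determined by $\rho_x$, since this is what unlocks the estimate $\dim_{Rok}(\beta_x)\leq 1$. This should follow from the universal property of $\mathcal{O}_{n(x)}$ together with the compatibility of the quotient map $\pi_x:\mathcal{O}_E\to \mathcal{O}_{n(x)}$ with the representations $\widehat{\tau}\lvert_{\Gamma(E)}$ and the fiberwise evaluation $\Gamma(E)\to E_x$, so no real difficulty arises beyond unwinding the constructions.
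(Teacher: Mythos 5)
Your proposal is correct and follows essentially the same route as the paper: verify that $\beta$ is $C(X)$-linear because $\widetilde{\alpha}$ is trivial, identify the fiber actions as the quasi-free actions coming from the faithful fiber representations so that $\dim_{Rok}(\beta_x)\leq 1$ by Gardella's result, and apply \cref{thm: rokhlin_cx_algebra}. The only cosmetic difference is the justification of nuclearity, where the paper cites \cite[Corollary 7.4]{katsura} directly.
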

\begin{proof}
Assume without loss of generality that $\dim(X) < \infty$. Since $X$ is metrizable, $C(X)$ is separable, and since $E$ is locally trivial, $\Gamma(E)$ is finitely generated (by Swan's theorem \cite{swan}). Hence, $\mathcal{O}_E$ is separable. Furthermore, $\mathcal{O}_E$ is nuclear by \cite[Corollary 7.4]{katsura}.\\

Now, let $(\alpha, \gamma)$ be the induced action of $G$ on the C*-correspondence $(\Gamma(E),C(X),\pi_E)$ given by \cref{eqn: induced_action_correspondence}. Since $\widetilde{\alpha}$ is trivial, $\alpha_s = \text{id}_{C(X)}$ for all $s\in G$, which implies (by \cref{lem: action_pimsner}) that $\beta_s\circ \widehat{\pi} = \widehat{\pi}$ for all $s\in G$. Therefore, $\beta$ acts by $C(X)$-linear automorphisms. \\

Now, for each $x\in X$, the representation $\gamma_x : G\to \text{Aut}(E_x)$ is assumed to be faithful. Hence, it follows from the above discussion that the induced quasi-free action $\beta_x : G\to \text{Aut}(\mathcal{O}_{n(x)})$ satisfies
\[
\dim_{Rok}(\beta_x)\leq 1.
\]
By \autoref{thm: rokhlin_cx_algebra}, we conclude that
\[
\dim_{Rok}(\beta) \leq (\dim(X)+1)(2) -1 = 2\dim(X) + 1.\qedhere
\]
\end{proof}

We now show how \cref{mainthm: action_bundle} may be applied to obtain structural properties of the corresponding crossed product. In what follows, we write $\dim_{nuc}(A)$ to denote the nuclear dimension of a C*-algebra $A$ \cite{nuclear_winter}.

\begin{corollary}
Let $X$ be a finite dimensional, compact metric space, and let $G$ be a compact, second countable group acting on a vector bundle $(E,p,X)$ as in \cref{mainthm: action_bundle}. Let $\beta : G\to \text{Aut}(\mathcal{O}_E)$ denote the induced action on the associated Cuntz-Pimsner algebra. Suppose that either of the following conditions hold:
\begin{enumerate}[label=(\roman*)]
\item The action of $G$ on $X$ is free, and $X/G$ is finite dimensional, or
\item $G$ is finite, the action of $G$ on $X$ is trivial, and the induced representation of $G$ on each fiber of $E$ is faithful.
\end{enumerate}
Then, $\mathcal{O}_E\rtimes_{\beta} G$ has finite nuclear dimension.
\end{corollary}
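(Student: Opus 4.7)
The plan is to derive this corollary by combining three ingredients: first, that $\mathcal{O}_E$ has finite nuclear dimension; second, \cref{mainthm: action_bundle}, which supplies $\dim_{Rok}(\beta) < \infty$ under either hypothesis; and third, the standard permanence principle that for a compact, second countable group $G$ acting on a separable C*-algebra $A$ with $\dim_{Rok}(\alpha) < \infty$ and $\dim_{nuc}(A) < \infty$, the crossed product $A \rtimes_\alpha G$ inherits finite nuclear dimension. The third ingredient is due to Gardella \cite{gardella_regularity}, so the substantive task is to verify the first.

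To establish $\dim_{nuc}(\mathcal{O}_E) < \infty$, I would invoke \cref{thm: vasselli}, which identifies $\mathcal{O}_E$ with a continuous $C(X)$-algebra whose fiber over $x \in X$ is the Cuntz algebra $\mathcal{O}_{n(x)}$, where $n : X \to \Z$ is the rank function of $E$. Local triviality of $E$ forces $n$ to be locally constant, and compactness of $X$ ensures that $n$ takes only finitely many values, say $n_1, \ldots, n_\ell$. Each Cuntz algebra $\mathcal{O}_k$ (including $k = \infty$) has finite nuclear dimension, so the fibers of $\mathcal{O}_E$ have uniformly bounded nuclear dimension. Combined with the hypothesis that $X$ is finite dimensional, the known bound on nuclear dimension for continuous $C(X)$-algebras over a finite-dimensional base with fibers of uniformly bounded nuclear dimension yields $\dim_{nuc}(\mathcal{O}_E) < \infty$. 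Alternatively, one may invoke the results of Brown, Tikuisis, and Zelenberg \cite{brown} on nuclear dimension of Cuntz-Pimsner algebras of vector bundles directly.

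With $\dim_{nuc}(\mathcal{O}_E) < \infty$ in hand, hypothesis (i) together with \cref{thm: rokhlin_free} gives $\dim_{Rok}(\beta) \leq \dim(X/G) < \infty$, while hypothesis (ii) together with \cref{thm: trivial_action_bundle} gives $\dim_{Rok}(\beta) \leq 2\dim(X) + 1 < \infty$. In either case, the crossed product permanence result for nuclear dimension referenced above produces $\dim_{nuc}(\mathcal{O}_E \rtimes_\beta G) < \infty$, as required. The main potential obstacle, if any, is ensuring the precise statement of the $C(X)$-algebra nuclear dimension bound applies uniformly to the (possibly non-constant) rank function; this is easily bypassed by partitioning $X$ into the finitely many clopen preimages $n^{-1}(n_i)$, on each of which $\mathcal{O}_E$ restricts to a $C(X)$-algebra with constant fiber, and then recombining.
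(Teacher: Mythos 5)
Your proposal is correct and follows essentially the same route as the paper: bound $\dim_{nuc}(\mathcal{O}_E)$ via Vasselli's identification of $\mathcal{O}_E$ as a continuous $C(X)$-algebra with Cuntz-algebra fibers (the paper cites Enders for $\dim_{nuc}(\mathcal{O}_n)=1$ and Gardella--Hirshberg--Santiago for the fiberwise bound, giving $\dim_{nuc}(\mathcal{O}_E)\leq 2\dim(X)+1$), then combine \cref{mainthm: action_bundle} with the crossed-product permanence theorem of Gardella \cite{gardella_regularity}. Your extra remark about partitioning $X$ by the locally constant rank function is a harmless elaboration; the cited $C(X)$-algebra bound already handles non-constant fiber dimension.
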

\begin{proof}
We know from \cite{enders} that $\dim_{nuc}(\mathcal{O}_n) = 1$ for all $n\in \N$. So, by \cref{thm: vasselli} and \cite[Theorem 4.13]{gardella}, we conclude that 
\[
\dim_{nuc}(\mathcal{O}_E) \leq (\dim(X)+1)(2) - 1 = 2\dim(X)+1.
\]
Under either of the conditions listed above, \cref{mainthm: action_bundle} implies that $\dim_{Rok}(\beta) < \infty$. Therefore, it follows from \cite[Theorem 3.4]{gardella_regularity} (see also \cite[Theorem 1.3]{hirshberg}) that
\[
\dim_{nuc}(\mathcal{O}_E\rtimes_{\beta} G) < \infty.\qedhere
\]
\end{proof}

Now, recall that finiteness of Rokhlin dimension depends on the existence of certain c.c.p. maps as described in \cref{defn: rokhlin_dim}. As mentioned in \cref{rem: commuting_ranges}, if one requires that these maps have commuting ranges, then one arrives at the definition of Rokhlin dimension \emph{with commuting towers}, and the associated integers is denoted by $\dim_{Rok}^c(\cdot)$. A closer inspection of the proofs leading up to \cref{thm: rokhlin_free} reveals the following: If the action of $G$ on $X$ is free, then
\[
\dim_{Rok}^c(\beta) \leq \dim(X/G).
\]
Therefore, if $X/G$ and $G$ are both finite dimensional, then \cite[Theorem D]{gardella} is applicable. This leads to the following permanence results.

\begin{corollary}
Let $X$ be a finite dimensional, compact metric space, and let $G$ be a finite dimensional, compact, second countable group acting on a vector bundle $(E,p,X)$ as in \cref{mainthm: action_bundle}. Let $\beta : G\to \text{Aut}(\mathcal{O}_E)$ denote the induced action on the associated Cuntz-Pimsner algebra. Suppose further that the action of $G$ on $X$ is free, and that $X/G$ is finite dimensional.
\begin{enumerate}[label=(\roman*)]
\item Let $\mathcal{D}$ denote a strongly self-absorbing C*-algebra such that $\mathcal{O}_n\otimes \mathcal{D} \cong \mathcal{O}_n$ for all $n\in \N$. Then, $\mathcal{O}_E\rtimes_{\beta} G$ absorbs $\mathcal{D}$ tensorially.
\item $\mathcal{O}_E\rtimes_{\beta} G$ has finite real rank.
\item $\mathcal{O}_E\rtimes_{\beta} G$ is nuclear, separable, and satisfies the UCT.
\end{enumerate}
\end{corollary}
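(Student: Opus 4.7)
The plan is to first verify each of the three properties for the ambient algebra $\mathcal{O}_E$ itself, and then transfer them to the crossed product. The key input, noted explicitly in the paragraph preceding the statement, is that under the freeness assumption the bound of \cref{thm: rokhlin_free} in fact holds for the commuting-tower variant, so
\[
\dim_{Rok}^c(\beta) \leq \dim(X/G) < \infty.
\]
Together with the assumed finite dimensionality of $G$, this makes \cite[Theorem D]{gardella} available as the ``permanence engine'' that will carry regularity properties from $\mathcal{O}_E$ to $\mathcal{O}_E \rtimes_{\beta} G$.

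For (i), I would first establish that $\mathcal{O}_E \otimes \mathcal{D} \cong \mathcal{O}_E$. By \cref{thm: vasselli}, $\mathcal{O}_E$ is a continuous $C(X)$-algebra whose fibers are the Cuntz algebras $\mathcal{O}_{n(x)}$, each of which absorbs $\mathcal{D}$ tensorially by hypothesis. Since $X$ is a finite-dimensional compact metric space, a standard fiberwise-to-global result for $\mathcal{D}$-stability of continuous $C(X)$-algebras over such bases (in the spirit of Hirshberg--R\o rdam--Winter or Dadarlat--Winter) then delivers the $\mathcal{D}$-absorption of $\mathcal{O}_E$. Applying \cite[Theorem D]{gardella} to $\beta$ with the commuting-tower bound above gives $(\mathcal{O}_E \rtimes_{\beta} G) \otimes \mathcal{D} \cong \mathcal{O}_E \rtimes_{\beta} G$.

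For (ii), finiteness of the real rank of $\mathcal{O}_E$ follows from its $C(X)$-algebra structure together with the assumption $\dim(X) < \infty$ and the fact that each fiber $\mathcal{O}_{n(x)}$ has real rank zero; the corresponding statement for the crossed product is another consequence of \cite[Theorem D]{gardella}. For (iii), nuclearity of $\mathcal{O}_E$ was already used in the proof of \cref{thm: trivial_action_bundle} via \cite[Corollary 7.4]{katsura}, separability is immediate from separability of $C(X)$ together with finite generation of $\Gamma(E)$, and the UCT for $\mathcal{O}_E$ is a standard consequence of the Cuntz--Pimsner construction applied to a correspondence over the commutative (hence UCT) algebra $C(X)$. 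All three properties pass to $\mathcal{O}_E \rtimes_{\beta} G$: separability and nuclearity are preserved under crossed products by compact, second countable groups, while the UCT is preserved for such crossed products with finite commuting-tower Rokhlin dimension by another application of \cite[Theorem D]{gardella}.

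The main technical obstacle I anticipate is the fiberwise-to-global $\mathcal{D}$-absorption step for $\mathcal{O}_E$ in (i): one must check that the hypotheses of the relevant continuous $C(X)$-algebra theorem are met, in particular the continuity of $\mathcal{O}_E$ (supplied by \cref{thm: vasselli}) and the finite topological dimension of $X$. Once that step is in hand, (i)--(iii) reduce essentially to bookkeeping together with successive invocations of \cite[Theorem D]{gardella}.
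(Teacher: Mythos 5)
Your proposal is correct and follows essentially the same route as the paper: establish $\dim_{Rok}^c(\beta)\leq\dim(X/G)<\infty$, verify each property for $\mathcal{O}_E$ using its structure as a continuous $C(X)$-algebra with Cuntz-algebra fibers (\cref{thm: vasselli}, together with \cite[Theorem 4.6]{hirshberg_rordam_winter} for $\mathcal{D}$-absorption and \cite[Theorem 4.13]{gardella} for real rank), and then transfer everything to the crossed product via \cite[Theorem D]{gardella}. The only divergence is minor: for the UCT of $\mathcal{O}_E$ the paper invokes Dadarlat's fiberwise KK-equivalence theorem \cite[Theorem 1.4]{mdd_fiberwise} for continuous $C(X)$-algebras with UCT fibers, whereas you deduce it directly from the Cuntz--Pimsner construction over the nuclear commutative base $C(X)$; both arguments are valid.
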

\begin{proof}
Our hypotheses imply that $\dim^c_{Rok}(\beta) < \infty$ by \cref{mainthm: action_bundle}. By \cite[Theorem D]{gardella}, in each of the above situations, it suffices to show that $\mathcal{O}_E$ satisfies the required property.
\begin{enumerate}[label=(\roman*)]
\item Let $\mathcal{D}$ be a strongly self-absorbing C*-algebra such that $\mathcal{O}_n\otimes \mathcal{D}\cong \mathcal{O}_n$ for all $n\in \N$. As mentioned above, $\mathcal{O}_E$ is separable. Therefore, by \cref{thm: vasselli} and \cite[Theorem 4.6]{hirshberg_rordam_winter}, it follows that $\mathcal{O}_E$ absorbs $\mathcal{D}$ tensorially.
\item Since $\mathcal{O}_n$ has real rank zero, \cite[Theorem 4.13]{gardella} ensures that $\mathcal{O}_E$ has finite real rank.
\item Since $\mathcal{O}_n$ is nuclear, separable, and satisfies the UCT, the same is true for $\mathcal{O}_E$ by \cite[Theorem 1.4]{mdd_fiberwise}.\qedhere
\end{enumerate}
\end{proof}

As mentioned earlier, the condition that $X/G$ is finite dimensional holds if $G$ is assumed to be a compact Lie group by \cite[Corollary 1.7.32]{palais}. We now conclude the paper with an example to show that \cref{thm: trivial_action_bundle} does not extend to the case when $G$ is infinite. 

\begin{example}
Let $G = S^1$, and $\gamma : G\to \mathcal{U}_2(\C)$ be the representation
\[
\gamma_z = \begin{pmatrix}
z & 0 \\
0 & 1
\end{pmatrix}.
\]
Then the induced action $\beta : G\to \text{Aut}(\mathcal{O}_2)$ is given on the canonical generators of $\mathcal{O}_2$ by
\[
\beta_z(s_1) = zs_1 \text{ and } \beta_z(s_2) = s_2.
\]
It follows by an application of \cite[Theorem 4.4]{kishimoto} (see also \cite[Theorem 4.8]{katsura_ideal}) that the crossed product $\mathcal{O}_2\rtimes_{\beta} G$ is not simple (in the notation of \cite{katsura_ideal}, we have $\Gamma = \widehat{G} = \Z, \omega = (1,0) \in \Gamma^2$, and the set $\Z_{\geq 0} \subset \Gamma$ is a non-trivial $\omega$-invariant set). Hence, it follows from \cite[Corollary 2.17]{gardella} that $\beta$ cannot have finite Rokhlin dimension.
\end{example}

\textbf{Acknowledgements.} The author was partially supported by the SERB (Grant No. MTR/2020/000385). The author would like to thank the referee for a careful reading of the manuscript, and Eusebio Gardella for useful discussions had after this paper first appeared on the arXiv.

\end{document}